\newcommand{\R}{\mathbb{R}}
\newtheorem{thm}[subsection]{Theorem}
\newtheorem{lem}[subsection]{Lemma}
\newtheorem{cor}[subsection]{Corollary}
\newtheorem{rem}[subsection]{Remark}
\title{Local Stability of Einstein Metrics Under the Ricci Iteration}
\author{Timothy Buttsworth, Maximilien Hallgren }
\date{}
\begin{document}

\maketitle
\begin{abstract}
        We provide a sufficient condition for the local stability of closed Einstein manifolds of positive Ricci curvature under the Ricci iteration in terms of the spectrum of the Lichnerowicz Laplacian acting on divergence-free tensor fields. We use this result to consider the stability of several Einstein manifolds under the Ricci iteration, including symmetric spaces of compact type. 
\end{abstract}
\section{Introduction}
On a smooth manifold $M$, a Ricci iteration is defined to be a sequence of Riemannian metrics $\{g_i\}_{i=1}^{\infty}$ such that 
\begin{equation}\label{RI}
Ric(g_{i+1})=g_i, \ i\in \mathbb{N},
\end{equation}
where $Ric(g)$ is the Ricci curvature of $g$. 
The Ricci iteration was introduced by Rubinstein as a method of discretising the Ricci flow 
\begin{equation}\label{RF}
\frac{\partial g}{\partial t}=-2Ric(g),
\end{equation} 
and has several applications in K\"ahler geometry \cite{Rubinstein07,Rubinstein08}. 
If $(M,g_1)$ is K\"ahler, a Ricci iteration exists if and only if
a positive multiple of $g_1$ represents the first Chern class \cite{DarvasRubinstein19}. Furthermore, the iteration
then converges modulo diffeomorphisms to a K\"ahler-Einstein metric whenever one exists. 
The non-K\"ahler case has also been studied, but only for a limited class of homogeneous spaces \cite{BPRZ18,PulemotovRubinstein}. 

A prerequisite to studying the Ricci iteration is an understanding of solutions to the prescribed Ricci curvature problem 
\begin{equation}\label{PRCE}
Ric(g)=T,
\end{equation}
because a Ricci iteration is constructed by recursively inverting the Ricci curvature operator. The problem of solving \eqref{PRCE} has been widely studied in, for example, \cite{Delanoe03,Delay01,Delay02,DeTurck82,DeTurck85,HMPRC}. One difficulty in proving existence of solutions to \eqref{PRCE} is the diffeomorphism invariance of the Ricci curvature, which manifests itself in the non-ellipticity of \eqref{PRCE}. The issues arising from the non-ellipticity of the Ricci curvature are also encountered when establishing short time existence results for solutions of the Ricci flow because the non-ellipticity of the Ricci curvature implies that \eqref{RF} is not a parabolic equation. 
The celebrated `DeTurck trick' technique originating from \cite{DeTurck82,DeTurck83} overcomes the non-parabolicity  of \eqref{RF} by replacing it with an equivalent parabolic flow, which evolves a metric by its Ricci curvature as well as by a diffeomorphism. 

The purpose of this paper is to examine the dynamical stability of the Ricci iteration \eqref{RI} near a fixed point of the iteration, i.e., a closed Einstein manifold $(M,g)$ whose Einstein constant is $1$. In this context, solutions to the prescribed Ricci curvature problem are well-understood by use of the Inverse Function Theorem for Banach spaces, as seen by DeTurck in \cite{DeTurck85}. To study the Ricci iteration, we also use the Inverse Function Theorem, but our key technique is developing a certain local and discrete analogue of the DeTurck trick to overcome the obstacle presented by the diffeomorphism invariance of the Ricci curvature. The result we obtain is similar to Theorems 1.4 and 1.5 in \cite{Kro13}, which relate dynamical stability of the Ricci flow near an Einstein manifold to both the spectrum of the rough Laplacian and whether or not the Einstein manifold maximises the Yamabe functional. 

This paper is organised as follows. In 
Section 2, we set up notation and appropriate Hilbert spaces on which to study the problem of inverting Ricci curvature. In Section 3, we state and prove Theorem \ref{LSTR}, which relates dynamical stability of fixed points of the Ricci iteration to the spectrum of the Lichnerowicz Laplacian acting on divergence-free $(0,2)$-tensor fields. In Section 4, we review standard material which allows us to compute the spectrum of the Lichnerowicz Laplacian on compact symmetric spaces. Finally, in Section 5, we look at specific examples of Einstein metrics, and determine whether they satisfy the assumptions of Theorem \ref{LSTR}. 
\section{Preliminaries and Notation}
Consider a closed Riemannian manifold $(M,g)$ satisfying $Ric(g)=g$. 
We denote by $T^* M$ the set of smooth one-forms on $M$, and by $S^2 T^* M$ the set of smooth symmetric $(0,2)$ tensor fields on $M$. 
It is well-known (see Proposition 2.3.7 of \cite{Topping}) that the linearisation of the Ricci curvature operator applied to $h\in S^2 T^* M$ is given by
\begin{align*}
  Ric(g)'(h)=-\frac{1}{2}\Delta_L h-\delta^* \delta G(h).  
\end{align*}
Here, $\Delta_L:S^2T^* M\to S^2 T^* M$ is the Lichnerowicz Laplacian, $\delta:S^2 T^* M\to T^* M$ is the divergence operator, $\delta^*:T^* M\to S^2 T^* M$ is the symmetrised covariant derivative, and $G:S^2T^* M\to S^2 T^* M$ is the gravitation tensor. If $h\in S^2 T^* M$, $v\in T^* M$, and $X,Y$ are vector fields on $M$, these operators can be defined by 
\begin{align*}
\Delta_L(h)(X,Y)&=(\Delta h)(X,Y)-h(X,Ric(Y))-h(Ric(X),Y)+2tr^g ( h(R(X,\cdot)Y,\cdot)),\\
\delta(h)(X)&=-tr^g ((\nabla_{\cdot} h)(X, \cdot)),\\
\delta^*(v)(X,Y)&=\frac{1}{2}(\nabla_X v(Y)+\nabla_Y v(X)),\\
G(h)(X,Y)&=h(X,Y)-\frac{1}{2}(tr^g h)g(X,Y),
\end{align*}
where $\Delta:S^2 T^* M\to S^2 T^* M$ is the rough Laplacian. 
By a computation involving the commuting of derivatives, we can show that both $\mbox{im}(\delta^{\ast})$ and $\ker \delta$ are invariant under the action of $\Delta_L$.  

We will study the above operators on $H^k(S^2 T^*M)$ by which we mean the completion of $S^2 T^* M$ with respect to the Sobolev norm $H^k$ induced by $g$. Similarly, we define $L^2 (S^2 T^* M)$ to be the completion of $S^2 T^* M$ with respect to the usual $L^2$ norm induced by $g$, and other occurences of $H^k$ and $L^2$ are defined analogously. We equip $H^k(S^2 T^* M)$ with the bilinear form
\begin{align}\label{NIP}
\langle h_1,h_2\rangle_k =\int_M \langle (C-\Delta_L)^{\frac{k}{2}}h_1, (C-\Delta_L)^{\frac{k}{2}}h_2\rangle_g d\mu_g 
\end{align}
for some constant $C > 0$, where $\mu_g$ is the volume form induced by $g$. 
The following lemma demonstrates that for an appropriate choice of $C$, these bilinear forms are defined and turn the collection of vector spaces $\{H^k(S^2 T^*M)\}_{k\in \mathbb{N}}$ into a family of Hilbert spaces.  
\begin{lem}\label{NHSS}
The constant $C$ can be chosen so that, in the sense of quadratic forms on $L^2$, we have
 $$I - \Delta  \leq C - \Delta_L \leq B(I - \Delta)$$ for some constant $B>0$. In this case:
 
(i) The expression $\langle \cdot,\cdot\rangle_k$ in \eqref{NIP} is defined. 

(ii) For each $h\in H^{l}(S^2 T^*M)$, $\langle h,h\rangle_{l}\ge \langle h,h\rangle_k$ whenever $l\ge k$.  

(iii) The form $\langle \cdot,\cdot\rangle_k$ is a scalar product on $H^k(S^2 T^*M)$, and the associated norm is equivalent to the usual $H^k(S^2 T^*M)$ norm. 
\end{lem}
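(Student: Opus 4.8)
The plan is to set $P := I - \Delta$ and $Q := C - \Delta_{L}$ and then argue entirely through the spectral theorem applied to $Q$. Note first that $\Delta = -\nabla^{*}\nabla$ is non-positive, so $P = I + \nabla^{*}\nabla \geq I$ as a quadratic form on $L^{2}$. To produce $C$, I would observe that, by the displayed formulas, $(\Delta_{L} - \Delta)h$ is a pointwise $g$-linear expression in $h$ with coefficients built from $Ric$ and $R$; hence $\Delta_{L} - \Delta$ is a smooth bundle endomorphism of $S^{2}T^{*}M$, and it is fibrewise symmetric by the symmetry of $Ric$ and the pair symmetry of the Riemann tensor. Since $M$ is closed, there is $K > 0$ with $-K\,I \leq \Delta_{L} - \Delta \leq K\,I$ pointwise, hence also as $L^{2}$ quadratic forms. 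Taking $C := 1 + K$ (so $C > 0$), one gets $Q = P + [(C-1)I - (\Delta_{L} - \Delta)] \geq P$ since the bracket is $\geq (C-1-K)I = 0$, and $Q = (C-\Delta) - (\Delta_{L}-\Delta) \leq (C+K) - \Delta \leq (C+K)P$, the last inequality because $(C+K-1)(-\Delta) \geq 0$. This gives the sandwich with $B := C+K$.

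For (i) and (ii): as a symmetric, elliptic, second-order operator on a closed manifold, $\Delta_{L}$ is essentially self-adjoint with self-adjoint closure of domain $H^{2}(S^{2}T^{*}M)$, so $Q$ is self-adjoint, and $Q \geq P \geq I$ forces $\operatorname{spec}(Q) \subseteq [1,\infty)$. Thus $Q^{s}$ is defined for every $s \geq 0$ by functional calculus; since $Q$ is elliptic of order $2$ with the same principal symbol as $-\Delta$, elliptic regularity identifies $\operatorname{dom}(Q^{m}) = H^{2m}(S^{2}T^{*}M)$ for $m \in \N$, and interpolation between these gives $\operatorname{dom}(Q^{k/2}) = H^{k}(S^{2}T^{*}M)$; hence $Q^{k/2}h_{i} \in L^{2}$ for $h_{i} \in H^{k}$, the integral in \eqref{NIP} converges, and (i) holds. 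For (ii), writing $E$ for the spectral resolution of $Q$ and $h \in H^{l}(S^{2}T^{*}M) \subseteq H^{k}(S^{2}T^{*}M)$ with $l \geq k$, we have $\langle h,h\rangle_{j} = \|Q^{j/2}h\|_{L^{2}}^{2} = \int_{[1,\infty)} \lambda^{j}\, d\langle E_{\lambda}h,h\rangle$ for $j \in \{k,l\}$; since $\lambda^{l} \geq \lambda^{k}$ on $[1,\infty)$ and $\langle E_{\lambda}h,h\rangle$ is non-decreasing, $\langle h,h\rangle_{l} \geq \langle h,h\rangle_{k}$.

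For (iii): bilinearity and symmetry of $\langle\cdot,\cdot\rangle_{k}$ are immediate from self-adjointness of $Q^{k}$, and $\langle h,h\rangle_{k} = \|Q^{k/2}h\|_{L^{2}}^{2} \geq 0$ vanishes only for $h = 0$ since $Q^{k/2}$ is injective ($Q \geq I$); so $\langle\cdot,\cdot\rangle_{k}$ is a scalar product. For the norm equivalence, I would use that $(I-\Delta)^{k/2} \colon H^{k}(S^{2}T^{*}M) \to L^{2}(S^{2}T^{*}M)$ is a topological isomorphism, so the standard $H^{k}$-norm is equivalent to $\|(I-\Delta)^{k/2}\cdot\|_{L^{2}}$; it then suffices to prove the analogous statement for $Q^{k/2}$. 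But $Q^{k/2}\colon H^{k} \to L^{2}$ is bounded (its domain being exactly $H^{k}$, by the closed graph theorem) and bijective with bounded inverse $Q^{-k/2}\colon L^{2}\to H^{k}$, hence an isomorphism by the open mapping theorem. Alternatively one can argue by hand, using that $Q - (I-\Delta) = (C-1)I - (\Delta_{L}-\Delta)$ is a bounded bundle endomorphism and hence bounded on every $H^{s}$, to bootstrap equivalence of $\|Q^{m}\cdot\|_{L^{2}}$ and $\|\cdot\|_{H^{2m}}$ for integers $m$ and then interpolate. Completeness of $(H^{k}(S^{2}T^{*}M),\langle\cdot,\cdot\rangle_{k})$ is then automatic from equivalence with the complete standard $H^{k}$-norm.

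The one genuinely analytic point, and the main obstacle, is identifying $\operatorname{dom}(Q^{k/2})$ with $H^{k}$ and matching the two norms: the $L^{2}$ quadratic-form sandwich $I-\Delta \leq Q \leq B(I-\Delta)$ settles the cases $k \leq 2$ directly, but for larger $k$ the map $t \mapsto t^{k/2}$ is no longer operator monotone, so one must invoke either the theory of complex powers of elliptic operators or the structural fact that $Q$ and $I-\Delta$ differ by a zeroth-order operator. Everything else is bookkeeping with the spectral theorem and standard facts about Sobolev spaces on closed manifolds.
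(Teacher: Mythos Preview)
Your proposal is correct and tracks the paper's strategy closely: both of you obtain the sandwich by bounding the zeroth-order endomorphism $\Delta_L-\Delta$ on the closed manifold, and both deduce (i) and (ii) from self-adjointness and the lower bound $C-\Delta_L\ge I$.

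The substantive difference is in (iii). The paper reduces to the auxiliary form $(h_1,h_2)_k=\langle (I-\Delta)^{k/2}h_1,(I-\Delta)^{k/2}h_2\rangle_{L^2}$ (citing Taylor for its equivalence with the standard $H^k$ norm) and then asserts the chain
\[
\int_M \langle (I-\Delta)^k h,h\rangle_g \,d\mu_g \;\le\; \int_M \langle (C-\Delta_L)^k h,h\rangle_g \,d\mu_g \;\le\; B^k\!\int_M \langle (I-\Delta)^k h,h\rangle_g \,d\mu_g,
\]
reading off the equivalence directly. You instead identify $\mathrm{dom}\big((C-\Delta_L)^{k/2}\big)=H^k$ via elliptic regularity and interpolation, and conclude norm equivalence from the fact that $(C-\Delta_L)^{k/2}\colon H^k\to L^2$ is a bijection between Banach spaces (closed graph/open mapping). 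Your final paragraph is well taken: the displayed chain in the paper does \emph{not} follow from the $L^2$ sandwich by operator monotonicity alone once $k>1$, since $t\mapsto t^k$ is not operator monotone and $\Delta$ and $\Delta_L$ need not commute. The honest justification is exactly the bootstrap you sketch, using that $Q-(I-\Delta)$ is zeroth-order and hence bounded on every $H^s$; this is the extra structural input that makes the paper's inequality (or, more safely, the equivalence of $\|Q^{k/2}\cdot\|_{L^2}$ and $\|(I-\Delta)^{k/2}\cdot\|_{L^2}$) go through. So your argument is a more careful variant of the paper's, and your closing remark correctly isolates the one genuinely analytic step.
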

\begin{proof}
Since $A:=\Delta_L-\Delta:L^2(S^2 T^* M)\to L^2(S^2 T^* M)$ is a bounded linear operator, the constant $C$ can be chosen so that $$I - \Delta  \leq C - \Delta_L.$$
On the otherhand, 
\begin{align*}
C-\Delta_L\le C+\left|A\right|_{L^2}-\Delta,
\end{align*}
which can be made less than $B(I -\Delta)$ for large enough $B$. 

For this choice of $C$, conditions $(i)$ and $(ii)$ hold because $C-\Delta_L$ is self-adjoint on $L^2$, and satisfies $C-\Delta_L\ge I>0$. To check condition $(iii)$, note that it suffices to show that the norms are equivalent for smooth tensors. Now 
by arguments appearing in page 363 of \cite{TaylorPDE}, we know that 
$$(h_1, h_2)_k := \int_M \langle (I -\Delta )^{k/2} h_1 , (I -\Delta)^{k/2} h_2 \rangle_g d\mu_g $$
is an inner product for $H^k(S^2 T^{\ast}M)$ giving an equivalent norm to the usual Sobolev norm. Therefore, the proof will be complete if we can demonstrate that $\langle\cdot,\cdot\rangle_k$ is equivalent to $(\cdot,\cdot)_k$ on smooth tensors. This follows from the estimate 
\begin{align*}
(h,h)_k&=\int_M \langle (I -\Delta)^k h, h \rangle_g d\mu_g\\
&\leq \int_M \langle (C - \Delta_L)^k h, h \rangle_g d\mu_g\\
& \leq \int_M B^k\langle (I -\Delta)^k h, h \rangle_g d\mu_g\\
&=B^k (h,h)_k
\end{align*} 
and the observation that $\Delta_L$ is self-adjoint with respect to $L^2(S^2 T^* M)$, so  $\int_M \langle (C - \Delta_L)^k h, h \rangle_g d\mu_g=\langle h,h\rangle_k$. 
\end{proof}
We now have Hilbert spaces on which to study the linearisation of the Ricci curvature. We chose the new inner products because $\Delta_L$ is now self-adjoint on $H^k(S^2 T^*M)$, but also because of the new relationship between $H^{k}(\ker \delta)$ and $\delta^*(H^{k+1}T^* M)$. Indeed, since both $\ker \delta$ and $\mbox{im}(\delta^*)$ are invariant under the action of $\Delta_L$,  $H^{k}(\ker \delta)$ and $\delta^*(H^{k+1}T^* M)$ are now orthogonal in $H^k(S^2 T^* M)$. In fact, since $\delta \delta^*$ is elliptic and non-negative, results appearing in \cite{Berger} imply that \begin{align*}
    H^{k}(S^2 T^* M)=H^{k}(\ker \delta)\oplus \delta^*(H^{k+1}T^* M),
\end{align*} 
so $H^{k}(\ker \delta)$ and $\delta^*(H^{k+1}T^* M)$ are actually orthogonal complements of each other, and are both closed because $H^{k}(\ker \delta)$ is closed. 
For ease of notation, we define $$B_k(r) := \{h \in H^k(S^2 T^{\ast} M) \: ; \: ||h||_{H^k(S^2 T^{\ast} M)} < r \}.$$


\section{Existence and Stability of the Ricci Iteration}
In this section we prove our main result, which establishes existence and stability of the Ricci iterations.
\begin{thm}\label{LSTR}
Suppose $(M,g)$ is a closed $n$-dimensional smooth Einstein manifold with $Ric(g)=g$ such that the following conditions hold:

(a) The kernel of the Lichnerowicz  Laplacian $\Delta_L:L^2(S^2 T^* M)\to L^2(S^2 T^* M)$ is one-dimensional.

(b) The Lichnerowicz Laplacian acting on $L^2(\ker \delta )$ has no eigenvalues lying in $[-2,2]\setminus \{0\}$.

\noindent Then there exists an $\epsilon>0$ such that for each $g_0$ satisfying $g-g_0\in B_{2n+2}(\epsilon)$:

(i) There exists a constant $c>0$ and a Ricci iteration $\{g_i\}_{i=1}^{\infty}$ with $g_1=cg_0$.

(ii) For each $k\ge 2n+2$, $\{g_i\}_{i=I}^{\infty}\subset H^{k}(S^2 T^* M)$ for some $I\in \mathbb{N}$, and $\{g_i\}_{i=I}^{\infty}$ converges to $g$ in 

$H^{k}(S^2 T^* M)$, up to $H^{k}$ diffeomorphisms. 

(iii) If $g-g_0\in B_{2n+2}(\epsilon)$ is smooth, then $\{g_i\}_{i=1}^{\infty}$ is smooth, and its
convergence to $g$ is smooth. 
\end{thm}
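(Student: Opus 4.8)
The core strategy is to implement a "discrete DeTurck trick" that turns the Ricci-inversion step into an operation to which the Inverse Function Theorem applies with quantitative control. The linearisation $Ric(g)'(h) = -\tfrac12 \Delta_L h - \delta^*\delta G(h)$ is not invertible because of the $\delta^*\delta G$ term, which reflects diffeomorphism invariance; but on the complement $H^{k}(\ker\delta)$ established in Section 2, the relevant operator is essentially $-\tfrac12 \Delta_L$. So first I would set up a map $\Phi$ that, given a metric $\tilde g$ near $g$, produces a metric $\Psi(\tilde g)$ with $Ric(\Psi(\tilde g)) = \tilde g$ after composing with a suitable diffeomorphism (or, equivalently, works on the slice of metrics satisfying a gauge condition relative to $g$), and verify via the Inverse Function Theorem on the Hilbert spaces $H^{k}(S^2 T^*M)$ — using that condition (a) forces $\ker \Delta_L$ to be exactly the span of $g$ (which corresponds to the trivial rescaling degree of freedom, handled by the constant $c$ in part (i)) — that $\Psi$ is a well-defined smooth map on a small ball, with derivative at $g$ equal to $(-\tfrac12\Delta_L)^{-1}$ on the divergence-free slice, extended appropriately. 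The constant $C$ and the equivalent inner products from Lemma \ref{NHSS} are exactly what make $\Delta_L$ self-adjoint and the splitting orthogonal, so that the operator norm of this inverse can be read off from the spectrum.

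Next I would analyse the linearised iteration. Iterating $Ric(g_{i+1}) = g_i$ near $g$ amounts to iterating $\Psi$, whose linearisation $D\Psi|_g$ acts on $H^{k}(\ker\delta)$ (modulo the one-dimensional scaling and diffeomorphism directions) as the bounded self-adjoint operator with spectrum $\{ \tfrac{2}{\lambda} : \lambda \in \mathrm{spec}(\Delta_L|_{\ker\delta}) \}$. Condition (b) says no eigenvalue $\lambda$ of $\Delta_L|_{\ker\delta}$ lies in $[-2,2]\setminus\{0\}$, i.e. $|2/\lambda| < 1$ on the orthogonal complement of $\ker\Delta_L$; hence $D\Psi|_g$ is a contraction with factor $q<1$ on the slice. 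The scaling direction is neutral (eigenvalue $1$) but is exactly accounted for by choosing the normalising constant $c = c(g_0)$ so that $g_1 = cg_0$ has the correct volume (or correct projection onto $\ker\Delta_L$); this is why part (i) asserts existence of $c$ rather than working with $g_0$ directly. With a genuine contraction on the complementary slice, a standard stable-manifold / Banach fixed point argument gives a uniform $\epsilon$: for $g - g_0 \in B_{2n+2}(\epsilon)$, the orbit stays in the ball, and $\|g_i - g\|$ (modulo diffeomorphism and after the initial rescaling) decays geometrically, proving convergence in $H^{2n+2}$ up to diffeomorphisms — part (ii) for $k = 2n+2$.

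The promotion to higher $k$ in part (ii) and to $C^\infty$ in part (iii) is then a bootstrapping argument: once $g_i \to g$ in $H^{2n+2}$, elliptic regularity for the gauged Ricci-inversion equation (the DeTurck-type operator is elliptic) shows that if $g_i \in H^{k}$ then $g_{i+1} = \Psi(g_i) \in H^{k+2}$, so after finitely many steps the iteration enters $H^{k}(S^2T^*M)$ for any prescribed $k$; and the contraction estimate can be re-run in each $H^{k}$ norm (using part (ii) of Lemma \ref{NHSS} to compare norms), giving geometric convergence in every $H^{k}$, hence $C^\infty$ convergence when the initial data is smooth. The Sobolev threshold $2n+2$ is presumably chosen so that $H^{2n+2}$ embeds in $C^2$ (or in a Banach algebra suitable for the nonlinear Ricci operator), ensuring the composition operators and the Inverse Function Theorem work from the start.

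I expect the main obstacle to be making the "discrete DeTurck trick" both well-defined and quantitatively controlled: unlike the continuous DeTurck flow, here one must, at each step, invert Ricci on the image side while simultaneously pinning down a gauge (a diffeomorphism depending smoothly on $g_i$) so that the linearised operator is uniformly invertible on a fixed slice, and then show the chosen gauge does not accumulate pathologically over infinitely many iterations — i.e. that the diffeomorphisms composing up along the orbit converge. Handling the neutral scaling direction cleanly (via the choice of $c$ and condition (a)) and confirming that the spectral gap from condition (b) survives the passage from $\Delta_L|_{\ker\delta}$ to the actual linearised iteration operator on the gauge slice are the other delicate points; everything else is a fairly standard application of the Inverse Function Theorem plus elliptic bootstrapping.
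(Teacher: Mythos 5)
Your overall strategy coincides with the paper's: a gauge-fixed (discrete DeTurck) inverse-Ricci map built via the Inverse Function Theorem on the slice $H^k(\ker\delta)$, a contraction argument driven by condition (b) (the linearised iteration acts with eigenvalues of modulus $2/|\lambda|<1$ on the complement of $\ker\Delta_L$), and elliptic bootstrapping for parts (ii) and (iii). Your worry about the composed diffeomorphisms accumulating is also moot: the theorem only asserts convergence up to diffeomorphisms, and the paper runs the entire iteration on the slice $\ker\delta$, unwinding the gauge maps $\varphi_i$ only formally at the end.

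The one step that would fail as written is your treatment of the scaling direction. You describe it as a neutral eigendirection of $D\Psi|_g$ with eigenvalue $1$, to be handled by a one-time choice of $c$ and a stable-manifold argument. But $Ric$ is scale-invariant, so $d(Ric)_g(g)=0$: the direction $\R\cdot g$ lies in the kernel of the linearised Ricci operator and, by self-adjointness of $\Delta_L$ and condition (a), is orthogonal to its image. The inverse Ricci map therefore cannot be ``extended appropriately'' to that direction at all --- there is no eigenvalue-$1$ mode to place on a centre manifold; the iteration is simply undefined along $\R\cdot g$, and a one-time volume normalisation of $g_0$ does not control the drift of the nonlinear orbit toward that degenerate direction. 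The paper resolves this by iterating the modified operator $h\mapsto \bigl(1+2\langle g,h\rangle_{L^2}/\langle g,g\rangle_{L^2}\bigr)Ric(h+g)-g$, whose linearisation $\tilde{\Delta}_L-\delta^*\delta G$ sends $g\mapsto 2g$; its inverse then contracts the scaling direction by the factor $1/2$, so no normalisation and no centre-manifold analysis are needed during the iteration. The genuine Ricci iteration is recovered only afterwards by the per-step rescaling $\tilde{g}_i=g_i/\bigl(1+2\langle g,\Psi(g_i-g)\rangle_{L^2}/\langle g,g\rangle_{L^2}\bigr)$, and the constant $c$ of part (i) is precisely this factor at $i=1$. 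To rescue your route you would have to either restrict to a codimension-one slice transverse to $\R\cdot g$ and prove the iteration preserves it (it does not, in general), or build an equivalent modification of the map; with that repaired, the rest of your outline goes through as in the paper.
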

\begin{rem}
We have chosen $H^{2n+2}$ to be our base level of regularity in Theorem \ref{LSTR} for convenience. It is conceivable that Theorem \ref{LSTR} remains true by replacing $H^{2n+2}$ with $H^{k}$ for some $k<2n+2$, but we will not dwell on this issue here.
\end{rem}
The main tool behind the proof of Theorem \ref{LSTR} is the Inverse Function Theorem for $C^1$ maps between Banach spaces. However, there are two main obstacles to its immediate application. The first is that the Ricci curvature is scaling invariant, so the inverse Ricci operator is not well-defined. We overcome this problem by modifying the Ricci curvature to account for scaling (see the function $\Phi$ constructed in Subsection \ref{RCM}) as well as introducing the modified Lichnerowicz Laplacian $\tilde{\Delta}_L:\ker(\delta)\to \ker(\delta)$ defined by 
$\tilde{\Delta}_L(h+tg)=-\frac{1}{2}\Delta_L h + 2tg$ for $h$ with $\langle g,h\rangle_{L^2}=0$. Since $\ker \Delta_L=\mathbb{R}\cdot g$ (by condition $(a)$ of Theorem \ref{LSTR}) and $\Delta_L$ respects the decomposition $\ker\delta=\{h\in \ker\delta: \langle g,h\rangle_{L^2}=0\}\oplus \mathbb{R}\cdot g$, we see that $\tilde{\Delta}_L$ is invertible. 
The second obstacle is associated with the diffeomorphism invariance. In particular, the linearisation of the Ricci curvature acts as the identity when applied to the image of $\delta^*$. We overcome this problem by modifying our metric by a diffeomorphism at each step of the iteration.

We outline the proof of Theorem \ref{LSTR}. In Subsection  \ref{RCM}, we consider the problem of inverting Ricci curvature on $\ker \delta$, and construct the inverse Ricci curvature map $\Psi$. Considering only the linearisation of $\Psi$ is not sufficient for the proof of Theorem \ref{LSTR} because of the diffeomorphism invariance of the Ricci curvature, so in Subsection \ref{DM}, we construct another map $\eta$ which modifies a given Riemannian metric by a diffeomorphism. The map $\eta$ is constructed so that $F:=\Psi\circ \eta$ is a map on $\ker\delta$. We then examine the dynamical stability of this map $F$ in Subsection \ref{DSRI}, and conclude the proof of Theorem \ref{LSTR}.  

\subsection{The Ricci Curvature Map}\label{RCM}
We define the modified Ricci curvature map
\begin{align*} \Phi: B_{2n+2}(\epsilon_{2n+2})\cap H^{2n+2}(\ker \delta)\to H^{2n}(S^2 T^{\ast} M),  \hspace{8 mm} h \mapsto \left(1 + \frac{2\langle g,h\rangle_{L^2}}{\langle g,g\rangle_{L^2}}\right)Ric(h + g) - g . \end{align*}
This map is defined and $C^1$ if $\epsilon_{2n+2}$ is small enough. This subsection will be spent finding  $\Phi^{-1}$ and showing it has useful properties. The first lemma is a modification of Lemma 2.6 in \cite{DeTurck85}.

\begin{lem}\label{IRM} Under the assumptions of Theorem \ref{LSTR}, there exists a Banach submanifold $X$ of $H^{2n}(S^2 T^{\ast}M)$, $\epsilon_{2n+2}>0$ and a $C^1$ diffeomorphism $\Psi : X \to H^{2n+2}(\ker \delta) \cap B_{2n+2}(\epsilon_{2n+2})$ such that the following hold:\\
(i) $0 \in X$ and $T_0 X = (\tilde{\Delta}_L-\delta^*\delta G)(H^{2n+2}(\ker\delta))$.\\
(ii) For any $h\in X$,  $\Phi (\Psi(h))=h$. \\
(iii) If $$h+\delta^*(v)\in T_0 X\subset H^{2n}(\ker \delta) \oplus \delta^*(H^{2n+1}T^* M),$$ then  $$d\Psi_0 (h+\delta^*(v))=(\tilde{\Delta}_L)^{-1}(h).$$
\end{lem}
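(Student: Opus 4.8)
The plan is to apply the Inverse Function Theorem to the modified Ricci map $\Phi$, so the crux is to identify the linearisation $d\Phi_0$ and show it is an isomorphism onto its image, which will then be the tangent space $T_0 X$. First I would compute $d\Phi_0$ acting on $h \in H^{2n+2}(\ker\delta)$. Writing $h = h_0 + tg$ with $\langle g, h_0\rangle_{L^2} = 0$, the scaling factor $1 + 2\langle g,h\rangle_{L^2}/\langle g,g\rangle_{L^2}$ has derivative $2t$ at $h=0$ (recall $Ric(g) = g$ contributes the zeroth-order term), and $Ric'(g)(h) = -\frac{1}{2}\Delta_L h - \delta^*\delta G(h)$ by the formula quoted from \cite{Topping}. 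Since $\Delta_L g = 0$, the term $-\frac12 \Delta_L h$ only sees $h_0$, and combining the scaling contribution $2tg$ with $-\frac12\Delta_L h_0$ gives precisely $\tilde\Delta_L(h)$ by the definition of the modified Lichnerowicz Laplacian. Hence $d\Phi_0(h) = \tilde\Delta_L(h) - \delta^*\delta G(h)$, which is exactly the operator appearing in the claimed description of $T_0 X$.

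The next step is to show $d\Phi_0 = \tilde\Delta_L - \delta^*\delta G$ is injective on $H^{2n+2}(\ker\delta)$ with closed image, so that it is a Banach-space isomorphism onto $T_0 X := (\tilde\Delta_L - \delta^*\delta G)(H^{2n+2}(\ker\delta))$, a closed subspace — hence a Banach submanifold chart-wise — of $H^{2n}(S^2 T^* M)$. Injectivity is where hypotheses (a) and (b) enter: $\tilde\Delta_L$ is invertible on $\ker\delta$ as explained in the text (it equals $2\,\mathrm{Id}$ on $\mathbb{R}\cdot g$ and $-\frac12\Delta_L$ on the $L^2$-orthogonal complement within $\ker\delta$, which by (a) has trivial kernel; by (b) its spectrum avoids a neighbourhood of $0$, giving a bounded inverse with the right mapping properties on Sobolev scales). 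The operator $\delta^*\delta G$ maps into $\delta^*(H^{2n+1}T^*M)$, so under the orthogonal decomposition $H^{2n}(S^2T^*M) = H^{2n}(\ker\delta)\oplus\delta^*(H^{2n+1}T^*M)$ the map $d\Phi_0$ is "upper triangular" with the isomorphism $\tilde\Delta_L$ on the diagonal into $\ker\delta$; injectivity on $\ker\delta$ of the full operator follows because if $\tilde\Delta_L(h) - \delta^*\delta G(h) = 0$ then projecting to $\ker\delta$ forces $\tilde\Delta_L(h) = 0$, hence $h=0$. Boundedness of $d\Phi_0$ (it is a second-order operator $H^{2n+2}\to H^{2n}$, using the equivalence of norms from Lemma \ref{NHSS}) together with injectivity and closed range gives the isomorphism.

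With $d\Phi_0 : H^{2n+2}(\ker\delta) \to T_0 X$ an isomorphism of Banach spaces, the Inverse Function Theorem applied to $\Phi$ yields, after shrinking $\epsilon_{2n+2}$, a $C^1$ diffeomorphism from a neighbourhood $X$ of $0$ in the affine submanifold $\Phi(0) + T_0 X = T_0 X$ (note $\Phi(0) = Ric(g) - g = 0$) onto $B_{2n+2}(\epsilon_{2n+2})\cap H^{2n+2}(\ker\delta)$; call its inverse $\Psi$. This immediately gives (i) and (ii). For (iii), $d\Psi_0 = (d\Phi_0)^{-1}$, so given $h + \delta^*(v) \in T_0 X$ we must solve $\tilde\Delta_L(w) - \delta^*\delta G(w) = h + \delta^*(v)$ for $w \in H^{2n+2}(\ker\delta)$; projecting onto $\ker\delta$ gives $\tilde\Delta_L(w) = h$, so $w = (\tilde\Delta_L)^{-1}(h)$, which is the asserted formula.

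The main obstacle I anticipate is the functional-analytic bookkeeping rather than any single hard estimate: verifying that $\Phi$ is genuinely $C^1$ as a map between the stated Sobolev spaces (which needs that $h\mapsto Ric(h+g)$ loses exactly two derivatives smoothly near $g$, a standard but slightly delicate multiplication-and-composition argument on $H^{2n+2}$, where the Sobolev exponent $2n+2$ is chosen precisely so that $H^{2n+2}$ is a Banach algebra in dimension $n$ and the nonlinear terms behave well), and confirming that $\tilde\Delta_L$ and its inverse interact correctly with the $H^k$-scale so that elliptic regularity upgrades $\Psi$ to the claimed target space. Establishing that $X$ is a Banach submanifold — i.e. that $T_0 X$ is a closed, complemented subspace — reduces to the closed-range argument above, using that $\delta^*\delta G$ is lower order relative to $\tilde\Delta_L$ so $d\Phi_0$ is a Fredholm-type perturbation of an isomorphism; this is routine once the orthogonal splitting from Section 2 is in hand.
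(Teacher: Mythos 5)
Your proposal is correct and follows essentially the same route as the paper: compute $d\Phi_0=\tilde{\Delta}_L-\delta^*\delta G$, use the orthogonal splitting $H^{2n}(\ker\delta)\oplus\delta^*(H^{2n+1}T^*M)$ together with the invertibility of $\tilde{\Delta}_L$ (and the $\Delta_L$-invariance of the two summands) to get injectivity and closed image, apply the Inverse Function Theorem to realise $X=\Phi(B_{2n+2}(\epsilon_{2n+2})\cap H^{2n+2}(\ker\delta))$ as a Banach submanifold with $\Psi=\Phi^{-1}$, and read off (iii) by projecting $d\Phi_0(w)=h+\delta^*(v)$ onto $\ker\delta$. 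Two harmless slips: $\delta^*\delta G$ is second order, not lower order than $\tilde{\Delta}_L$ (your triangular argument, which is the one that actually does the work, does not need that claim), and $X$ is a submanifold tangent to $T_0X$ at $0$ rather than a neighbourhood of $0$ inside the linear space $T_0X$.
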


\begin{proof} We can compute the derivative of $\Phi$ at $0$:
$$d\Phi_0(h) = d(Ric)_g(h) + \frac{2\langle g,h\rangle_{L^2}}{\langle g,g\rangle_{L^2}}g = \tilde{\Delta}_L(h)-\delta^*\delta G(h).$$
 Since $\Delta_L:H^{2n+2}(\ker \delta)\to H^{2n}(\ker \delta)$ is a self-adjoint elliptic differential operator with kernel $\R \cdot g$ and image contained in $\{ h \in H^{2n}(\ker \delta ) \: : \: \langle g,h\rangle_{L^2} = 0 \}$, we know $\R \cdot g$ is the orthogonal complement of the image of $\Delta_L$ in $H^{2n}(\ker \delta)$.  Therefore, $d \Phi_0 : H^{2n+2}(\ker \delta ) \to H^{2n} (S^2 T^{\ast} M)$ is a continuous, injective linear map with closed image equal to $(\tilde{\Delta}_L-\delta^*\delta G)(H^{2n+2}(\ker \delta))$. By making $\epsilon_{2n+2}>0$ smaller if necessary, the Inverse Function Theorem then implies that $\Phi(H^{2n+2}(\ker \delta ) \cap B_{2n+2}(\epsilon_{2n+2}))$ is a $C^{\infty}$ Banach submanifold $X$ of $H^{2n}(S^2 T^{\ast} M)$, and $\Phi$ restricted to $H^{2n+2}(\ker \delta )\cap B_{2n+2}(\epsilon_{2n+2})$ is a diffeomorphism onto $X$, with inverse $\Psi$. It is straightforward to verify that $\Psi$ and $X$ satisfy conditions $(i),(ii)$ and $(iii)$.
\end{proof}
We can repeat the proof of Lemma \ref{IRM} for higher levels of regularity, obtaining other maps that satisfy the same properties. However, the following lemma emphasises that we can obtain all of these simply by restricting $\Psi$ to submanifolds of higher regularity. 
\begin{lem}\label{HRORM}
For each $k\ge 2n$, there exists $\epsilon>0$ such that the image of $X_k:=X\cap B_{k}(\epsilon)$ under $\Psi$ is contained in $H^{k+2}(\ker \delta)$, and furthermore, this restricted version of $\Psi$ is a differentiable map from $H^k(S^2 T^* M)$ to $H^{k+2}(\ker(\delta))$. We also have the following: \\
(i) $0 \in X_k$ and $T_0 X_k = (\tilde{\Delta}_L-\delta^*\delta G)(H^{k+2}(\ker\delta))$.\\
(ii) For any $h\in X_k$,  $\Phi (\Psi(h))=h$. \\
(iii) If $$h+\delta^*(v)\in T_0 X_k\subset H^{k}(\ker \delta) \oplus \delta^*(H^{k+1}T^* M),$$ then  $$d\Psi_0 (h+\delta^*(v))=(\tilde{\Delta}_L)^{-1}(h).$$
\end{lem}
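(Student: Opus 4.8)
The plan is to re-run the Inverse Function Theorem argument of Lemma \ref{IRM} one level higher in regularity and then identify the resulting submanifold and inverse map with restrictions of $X$ and $\Psi$. The only genuinely new ingredient is an elliptic regularity bootstrap, which is where the divergence-free constraint does its work, and which I expect to be the main obstacle.

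First I would repeat the proof of Lemma \ref{IRM} with $2n$ replaced by $k$: the map $\Phi$ is well-defined and $C^1$ from a small ball of $H^{k+2}(\ker\delta)$ into $H^k(S^2 T^* M)$ (since $Ric$ sends $H^{k+2}$ metrics to $H^k$ tensor fields), its differential at $0$ is again $\tilde\Delta_L - \delta^*\delta G$, and the same kernel computation shows this is a continuous injection with closed image $(\tilde\Delta_L - \delta^*\delta G)(H^{k+2}(\ker\delta))$. The Inverse Function Theorem then produces a $C^1$ Banach submanifold $X^{(k)} \subset H^k(S^2 T^* M)$ and a $C^1$ inverse $\Psi_k \colon X^{(k)} \to H^{k+2}(\ker\delta) \cap B_{k+2}(\epsilon'')$ of $\Phi$, satisfying the analogues of (i)--(iii) with $2n$ replaced by $k$, verified exactly as in Lemma \ref{IRM} using that $\tilde\Delta_L$ preserves $\ker\delta$.

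The heart of the matter is the claim that if $h \in X$ has $\|h\|_{H^k}$ small enough then $\tilde h := \Psi(h) \in H^{k+2}(\ker\delta)$. Here $\tilde h \in H^{2n+2}(\ker\delta)$ satisfies $\Phi(\tilde h) = h$ and $\delta \tilde h = 0$, and I would treat this pair of equations as a quasilinear system. Its linearisation at $\tilde h = 0$ has principal symbol $\xi \mapsto (\sigma_{\tilde\Delta_L - \delta^*\delta G}(\xi), \sigma_\delta(\xi))$, which a direct computation shows is injective for $\xi \neq 0$: the symbol kernel of the Ricci-type operator is spanned by the gauge directions $v \odot \xi$, on which $\sigma_\delta(\xi)$ is injective, so the system is overdetermined elliptic (the linearised DeTurck phenomenon; cf. \cite{DeTurck85}). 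Ellipticity being an open condition, the system stays elliptic for small $\tilde h$, and since $\tilde h \in H^{2n+2} \subset C^2$ and the quasilinear coefficients depend on $\tilde h$ but not on its derivatives, standard elliptic regularity lets one bootstrap a derivative at a time: from $\tilde h \in H^m$ the coefficients lie in $H^m$ and the first-order remainder in $H^{\min(m-1,k)}$, giving $\tilde h \in H^{\min(m+1,k+2)}$, and iterating reaches $\tilde h \in H^{k+2}$ after finitely many steps; the accompanying elliptic estimates keep $\|\tilde h\|_{H^{k+2}}$ controlled by $\|h\|_{H^k}$.

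Finally I would assemble the pieces. Shrinking $\epsilon''$ so that $B_{k+2}(\epsilon'') \cap H^{k+2}(\ker\delta) \subset B_{2n+2}(\epsilon_{2n+2}) \cap H^{2n+2}(\ker\delta)$ and $\Phi$ of this set lies in a small $H^k$-ball gives $X^{(k)} \subset X$ with $\Psi = \Psi_k$ on $X^{(k)}$, since $\Phi$ is injective on $B_{2n+2}(\epsilon_{2n+2}) \cap H^{2n+2}(\ker\delta)$ by Lemma \ref{IRM}; conversely, the regularity claim shows that for $\epsilon$ small every $h \in X \cap B_k(\epsilon)$ has $\Psi(h) \in H^{k+2}(\ker\delta) \cap B_{k+2}(\epsilon'')$, hence $h = \Phi(\Psi(h)) \in X^{(k)}$. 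Thus $X_k := X \cap B_k(\epsilon)$ is an open neighbourhood of $0$ in $X^{(k)}$, and $\Psi|_{X_k} = \Psi_k$ is $C^1$, in particular differentiable, from $H^k(S^2 T^* M)$ into $H^{k+2}(\ker\delta)$; properties (i)--(iii) are inherited from those of $X^{(k)}$ and $\Psi_k$. The main difficulty is the regularity claim of the previous paragraph — concretely, checking that the divergence-free gauge renders the Ricci-type equation elliptic, and then running the quasilinear bootstrap without losing derivatives.
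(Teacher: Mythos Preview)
Your approach is essentially the paper's: rerun the Inverse Function Theorem for $\Phi$ at regularity $k$, obtain a submanifold $X^{(k)}$ and an inverse $\Psi_k$, and identify $\Psi_k$ with $\Psi|_{X^{(k)}}$ by injectivity of $\Phi$ on the base $H^{2n+2}$ ball. The paper's proof stops there, treating the identification $X_k := X\cap B_k(\epsilon) = X^{(k)}$ as immediate from ``uniqueness''; you go further and justify the inclusion $X\cap B_k(\epsilon)\subset X^{(k)}$ by an overdetermined elliptic bootstrap on the system $(\Phi(\tilde h)=h,\ \delta\tilde h=0)$.

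That extra step is correct but heavier than necessary. A shorter route, implicit in the paper (and used explicitly in the proof of Lemma~\ref{MWZAR}), is to project: $\Phi_{\ker\delta}:=\mathrm{proj}_{\ker\delta}\circ\Phi$ has $d(\Phi_{\ker\delta})_0=\tilde\Delta_L$, which is a genuine isomorphism $H^{k+2}(\ker\delta)\to H^k(\ker\delta)$, so the IFT gives a \emph{full} local diffeomorphism at every level $k$, and the inverses at different levels coincide by uniqueness at the base level. Since $\Psi(h)=(\Phi_{\ker\delta})^{-1}(\mathrm{proj}_{\ker\delta}h)$ for $h\in X$, the regularity $\Psi(h)\in H^{k+2}(\ker\delta)$ for $h\in X\cap B_k(\epsilon)$ follows directly, without invoking quasilinear elliptic estimates. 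Your argument is not wrong, just more machinery than the situation demands.
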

\begin{proof}
Since $X_k\subset X$, it follows from $(ii)$ of Lemma \ref{IRM} that $\Phi\circ \Psi$ is the identity map on $X_k$, i.e., condition $(ii)$ holds. Since $\Phi:H^{k+2}(\ker \delta)\to H^{k}(S^2 T^* M)$ is differentiable close to $0$ and $d\Phi_0:H^{k+2}(\ker \delta)\to H^{k}(S^2 T^* M)$ is again a continuous, injective linear map with closed image, the Inverse Function Theorem can be used, at least close to $0$, to find a differentiable inverse for $\Phi:H^{k+2}(\ker \delta)\to H^{k}(S^2 T^* M)$, which must in fact be $\Psi$ by uniqueness. Conditions $(i)$ and $(iii)$ follow. 
\end{proof}
We conclude our examination of the Ricci curvature with the following lemma, which demonstrates that smoothness is also preserved by $\Psi$, whenever it is defined. 
\begin{lem}\label{RPS}
If $h\in X$ is smooth, then $\Psi(h)$ is smooth. 
\end{lem}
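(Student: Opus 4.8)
The plan is to reduce the assertion to the classical fact that a Riemannian metric whose Ricci tensor is smooth is itself smooth. Let $h \in X$ be smooth and set $\bar g := g + \Psi(h)$. By construction $\Psi(h) \in B_{2n+2}(\epsilon_{2n+2})$, so after shrinking $\epsilon_{2n+2}$ if necessary and using the Sobolev embedding $H^{2n+2}\hookrightarrow C^{1,\alpha}$, $\bar g$ is a genuine Riemannian metric of class $H^{2n+2}$ which is close to $g$ in $C^{1,\alpha}$. By part (ii) of Lemma \ref{IRM}, $\Phi(\Psi(h)) = h$, i.e.
\begin{align*}
\left(1 + \frac{2\langle g,\Psi(h)\rangle_{L^2}}{\langle g,g\rangle_{L^2}}\right) Ric(\bar g) = g + h .
\end{align*}
The point to exploit is that the coefficient $c := 1 + 2\langle g,\Psi(h)\rangle_{L^2}/\langle g,g\rangle_{L^2}$ is a single real number, positive once $\epsilon_{2n+2}$ is small, and not a function on $M$; hence $Ric(\bar g) = c^{-1}(g+h)$ is a \emph{smooth} symmetric $2$-tensor field on $M$. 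So it suffices to prove: if $\bar g$ is a Riemannian metric of class $C^{1,\alpha}$ on the closed smooth manifold $M$ and $Ric(\bar g)$ is smooth, then $\bar g$ is smooth; granting this, $\Psi(h) = \bar g - g$ is smooth.

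For this regularity fact I would appeal to the harmonic-coordinate method of DeTurck–Kazdan. Since $\bar g \in C^{1,\alpha}$, each point of $M$ admits $\bar g$-harmonic coordinates $y=(y^1,\dots,y^n)$, obtained by solving $\Delta_{\bar g}y^i = 0$, and these form at least a $C^{2,\alpha}$ chart. In such coordinates the prescribed Ricci equation becomes the quasilinear elliptic system
\begin{align*}
-\tfrac12\,\bar g^{ab}\,\partial_a\partial_b\,\bar g_{ij} + B_{ij}(\bar g,\partial\bar g) = Ric(\bar g)_{ij},
\end{align*}
where $B_{ij}$ is a universal expression, polynomial in $\bar g^{-1}$ and quadratic in $\partial\bar g$. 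Since the right-hand side is the smooth tensor $c^{-1}(g+h)$ expressed in harmonic coordinates, a standard Schauder bootstrap — at each stage, if $\bar g$ is $C^{k,\alpha}$ in the harmonic chart then the chart is $C^{k+1,\alpha}$, hence $Ric(\bar g)$ is $C^{k,\alpha}$ there, and interior Schauder estimates upgrade $\bar g$ to $C^{k+1,\alpha}$ — shows $\bar g$ is smooth in harmonic coordinates; this is precisely the DeTurck–Kazdan statement that a metric attains its optimal regularity, one derivative above that of its Ricci tensor, in harmonic coordinates. Finally, once $\bar g$ is smooth in the chart $y$, the functions $y^i$ solve $\Delta_{\bar g}y^i = 0$ with smooth coefficients and are therefore smooth functions on $M$ in its given smooth structure, so the harmonic charts are smoothly compatible with the atlas of $M$; hence $\bar g$ is a smooth tensor field and $\Psi(h) = \bar g - g$ is smooth.

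I expect the only real content to lie in the bootstrap of the preceding paragraph, with the one delicate point being the bookkeeping of regularity as one moves between the fixed smooth atlas of $M$ and the harmonic charts, which are initially only of finite regularity — in particular, checking that each iteration genuinely gains a full derivative rather than stalling. This is entirely classical, being the same mechanism that shows Einstein metrics are smooth (indeed real-analytic) in harmonic coordinates, so I would keep the exposition brief and cite DeTurck–Kazdan. As an alternative that never changes coordinates, one could instead observe that the system $\{\, Ric(g+k) = c^{-1}(g+h),\ \delta k = 0 \,\}$ for $k = \Psi(h)$ is overdetermined elliptic — its linearisation at $0$ sends $k \mapsto \big((\tilde{\Delta}_L-\delta^*\delta G)(k),\ \delta k\big)$, whose principal symbol is injective — and invoke regularity for quasilinear overdetermined elliptic systems directly on $M$; this keeps everything intrinsic but requires a slightly less standard citation.
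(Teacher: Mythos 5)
Your first step coincides with the paper's: use Lemma \ref{IRM}(ii) to see that $Ric(\bar g)=c^{-1}(g+h)$ is a smooth tensor field. But the statement you then reduce to --- ``$\bar g\in C^{1,\alpha}$ and $Ric(\bar g)$ smooth imply $\bar g$ smooth'' --- is false as phrased, and the failure occurs exactly at the point you flag as delicate. Counterexample: let $g_0$ be a smooth flat metric on a torus and $\varphi$ a diffeomorphism of class $C^{3,\alpha}$ but not $C^4$; then $\bar g:=\varphi^{*}g_0$ is a $C^{2,\alpha}$ metric with $Ric(\bar g)=\varphi^{*}Ric(g_0)=0$ smooth, yet $\bar g$ is not a smooth tensor field in the given smooth structure. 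What the harmonic-coordinate bootstrap genuinely yields is smoothness of $\bar g$ \emph{in harmonic coordinates}; your final step transferring this back to the fixed atlas of $M$ is circular, because ``$\Delta_{\bar g}y^i=0$ has smooth coefficients'' in that atlas is precisely the conclusion sought (in the counterexample the harmonic charts are only finitely compatible with the smooth atlas, and that is where the regularity is lost). The hypothesis you dropped is non-degeneracy of $Ric(\bar g)$: Theorem 4.5(a) of DeTurck--Kazdan requires it, and the paper's proof consists precisely of shrinking $\epsilon_{2n+2}$ so that $g+h$ is non-degenerate and $c\neq 0$, hence $Ric(\Psi(h)+g)$ is smooth \emph{and non-degenerate}, and then citing that theorem. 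Non-degeneracy is what allows one to supplement $Ric(\bar g)=T$ with the contracted Bianchi identity to get an overdetermined elliptic system for $\bar g$ in the original coordinates, so the regularity does not get lost in a change of smooth structure.

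Your closing ``alternative'' is actually the sound version of the argument in this setting: since $\Psi(h)\in\ker\delta$, the system $\{Ric(g+k)=c^{-1}(g+h),\ \delta_g k=0\}$ is overdetermined elliptic near $k=0$ and elliptic regularity applies directly on $M$ with its given atlas. Either develop that, or state the DeTurck--Kazdan theorem with its non-degeneracy hypothesis and verify it, as the paper does.
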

\begin{proof}
This follows from local regularity theory of solutions to the prescribed Ricci curvature equation. Indeed, by condition $(ii)$ of Lemma \ref{IRM}, we see that $$\left(1+\frac{2\langle g,\Psi(h)\rangle_{L^2}}{\langle g,g\rangle_{L^2}}\right)Ric(\Psi(h)+g)=g+h.$$ 
By making $\epsilon_{2n+2}$ smaller if necessary, we can assume that $g+h$ is non-degenerate and $1+\frac{2\langle g,\Psi(h)\rangle_{L^2}}{\langle g,g\rangle}\neq 0$. Then $Ric(\Psi(h)+g)$ is smooth and non-degenerate, so Theorem 4.5 (a) of \cite{DeTurck81} implies that $\Psi(h)+g$ is smooth.
\end{proof}

\subsection{The Map which Alters a Metric by a Diffeomorphism}\label{DM}
In this subsection, we develop a discrete analogue of the DeTurck trick with a function $\eta$ which alters a metric by a carefully-chosen diffeomorphism. The construction of $\eta$ itself is carried out in Lemma \ref{EODM} by way of the Implicit Function Theorem. Our use of the Implicit Function Theorem requires that certain actions are sufficiently differentiable, which we verify in  Lemmas \ref{QIS}, \ref{SRSF}, Corollary \ref{COP} and Lemma \ref{MWZAR}. We conclude this subsection with Lemmas \ref{HRODM} and \ref{EPS}, which describe the regularity of $\eta$.  

Our first lemma is a slight modification of some arguments in Section 3 of \cite{Ebin}, and concerns the action of the $H^{k+1}$ diffeomorphism group $\mathcal{D}^{k+1}(M)$ on $H^{k+1}(S^2 T^* M)$ for $k\ge 2n$.
\begin{lem}\label{QIS} The map $Q: \mathcal{D}^{k+1}(M) \times H^{k+1}(S^2 T^{\ast}M) \to H^k(S^2 T^{\ast} M), (\varphi , h) \mapsto \varphi^{\ast}h$ is $C^1$.
\end{lem}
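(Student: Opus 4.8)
The plan is to reduce the $C^1$ regularity of $Q$ to a standard fact about composition (Nemytskii-type) maps on Sobolev spaces, combined with the $\omega$-lemma from the Ebin--Marsden theory of the diffeomorphism group. First I would write the pullback explicitly in local coordinates: if $\varphi \in \mathcal{D}^{k+1}(M)$ and $h \in H^{k+1}(S^2T^*M)$, then $(\varphi^*h)_{ij}(x) = h_{ab}(\varphi(x))\,\partial_i\varphi^a(x)\,\partial_j\varphi^b(x)$. So $Q$ is built from three operations: (1) the composition $h \mapsto h \circ \varphi$, (2) taking first derivatives $\varphi \mapsto D\varphi$, which sends $\mathcal{D}^{k+1} \to H^k$, and (3) pointwise multiplication of $H^k$-sections, which is continuous and bilinear (hence smooth) for $k \ge 2n > n/2$ by the Sobolev multiplication theorem. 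The target regularity is $H^k$ rather than $H^{k+1}$ precisely because differentiating $\varphi$ costs one derivative; everything is then assembled at the $H^k$ level.

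The key input is the regularity of the composition map $\alpha: \mathcal{D}^{k+1}(M) \times H^{k+1}(S^2T^*M) \to H^k(S^2T^*M)$, $(\varphi,h)\mapsto h\circ\varphi$ (reading $h\circ\varphi$ in charts on its components). This is exactly the content of the $\omega$-lemma / Ebin--Marsden composition lemma: for $k \ge 2n$ (so that $H^{k+1}$ functions are $C^1$ and $H^k \cdot H^k \subset H^k$), the map $(\varphi,h)\mapsto h\circ\varphi$ from $\mathcal{D}^{k+1}\times H^{k+1}$ to $H^k$ is $C^1$, with derivative in the $h$-slot given by $u \mapsto u\circ\varphi$ and in the $\varphi$-slot given (formally) by $w \mapsto (\nabla h \cdot w)\circ\varphi$ — note the loss of one derivative on $h$ is what makes the $\varphi$-derivative land in $H^k$. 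One must check that the formal derivative is the genuine Fréchet derivative; this is the standard argument estimating $\|h\circ(\varphi+w) - h\circ\varphi - (\nabla h\cdot w)\circ\varphi\|_{H^k} = o(\|w\|_{H^{k+1}})$ using Taylor's theorem with integral remainder and Sobolev multiplication, valid because $w \in H^{k+1} \hookrightarrow C^1$ keeps $\varphi + w$ in $\mathcal{D}^{k+1}$ for small $w$.

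With $\alpha$ in hand, $Q$ is the composite of $\alpha$ (applied to the components $h_{ab}$), the smooth linear map $\varphi \mapsto (\partial_i\varphi^a)$ into $H^k$, and the smooth bilinear multiplication $H^k \times H^k \times H^k \to H^k$; hence $Q$ is $C^1$ by the chain rule and product rule for Fréchet derivatives. I expect the main obstacle to be the composition lemma for $\alpha$ — specifically, verifying that the $\varphi$-slot derivative is genuinely Fréchet (not merely Gâteaux) with the claimed one-derivative loss, and that all intermediate quantities stay in the correct Sobolev classes; the Sobolev exponent threshold $k \ge 2n$ is used repeatedly here and is more than enough (one really only needs $k > n/2$ plus one spare derivative, which is why the remark about $H^{2n+2}$ being a convenient but non-optimal choice applies). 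The chart-independence (so that the local formulas patch to a well-defined statement on the manifold) is routine and follows from the tensorial transformation law, so I would not dwell on it.
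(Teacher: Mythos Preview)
Your proposal is correct and follows essentially the same route as the paper: both arguments write $(\varphi^{\ast}h)(x) = d\varphi(x)\cdot (h\circ\varphi)(x)\cdot d\varphi(x)^t$ in local coordinates, observe that $\varphi\mapsto d\varphi$ is a bounded linear map into $H^k$, invoke a composition lemma for $(\varphi,h)\mapsto h\circ\varphi$ being $C^1$ from $\mathcal{D}^{k+1}\times H^{k+1}$ to $H^k$ (you cite the Ebin--Marsden $\omega$-lemma, the paper cites Proposition~2.19 of Inci--Kappeler--Topalov), and then assemble using Sobolev multiplication. The only substantive difference is that the paper is more explicit about the reduction from the Banach manifold $\mathcal{D}^{k+1}(M)$ to maps between Euclidean balls via vector bundle neighborhoods and coordinate covers in the sense of Palais; you dismiss this as ``routine'', which is fair but worth flagging since $\mathcal{D}^{k+1}(M)$ is not a Banach space and the chart transitions for its manifold structure are exactly where one needs care.
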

\begin{proof}
Fix $\phi \in \mathcal{D}^{k+1}(M)$, and choose finite covers of $M$ by coordinate charts $(U_{\alpha}, \varphi_{\alpha})_{\alpha = 1}^N$, $(V_{\alpha}, \psi_{\alpha})_{\alpha = 1}^N$ such that $\bar{B}_1(0) \subseteq \varphi_{\alpha}(U_{\alpha})$, $B_{\alpha} := \varphi_{\alpha}^{-1}(B_1(0))$ cover $M$, and $\phi(\bar{B}_{\alpha}) \subseteq V_{\alpha}$.  Since the set of $\zeta \in \mathcal{D}^{k+1}(M)$ satisfying $\zeta(\bar{B}_{\alpha}) \subseteq V_{\alpha}$ is a neighborhood $\mathcal{W}$ of $\phi$ in $\mathcal{D}^{k+1}(M)$, we can find a vector bundle neighborhood $\mathcal{U} \subseteq \mathcal{W}$ of $\phi$ in $\mathcal{D}^{k+1}(M)$ (see \cite{Palais}, Section 12).  In particular, there is a $C^{\infty}$ fiber bundle map $F: TM \to M \times M$ (where we view $M \times M$ as a trivial bundle over $M$) such that $F_{\ast}: H^{k+1}(TM) \to \mathcal{U}, X \mapsto F(X)$ is a diffeomorphism.  It thus suffices to show that
$$H^{k+1}(TM) \times H^{k+1}(S^2 T^{\ast}M) \to H^k(S^2 T^{\ast}M), (X, h) \mapsto F(X)^{\ast}h$$
is $C^1$.

By \cite{Palais}, Section 4, we may view $H^{k+1}(TM)$ as a closed subspace of $\oplus_{\alpha = 1}^N H^{k+1}(\bar{B}_1(0), \R^n)$ via the map
$$X \mapsto \big{(} (\varphi_{\alpha})_{\ast} (X|\bar{B}_{\alpha}) \big{)}_{\alpha = 1}^N.$$
Similarly, we identify $H^{k+1}(S^2 T^{\ast}M)$ as a closed subspace of $\oplus_{\alpha = 1}^N H^{k+1}(\bar{B}_1(0), \R^n \otimes \R^n)$ via
$$h \mapsto \big{(} (\psi_{\alpha}^{-1})^{\ast} h|\bar{B}_1(0) \big{)}_{\alpha = 1}^N ,$$
and we identify $H^k (S^2 T^{\ast}M)$ as a closed subspace of $\oplus_{\alpha = 1}^N H^k (\bar{B}_1(0), \R^n \times \R^n)$ via
$$h \mapsto \big{(} (\varphi_{\alpha}^{-1})^{\ast}|\bar{B}_1(0)) \big{)}_{\alpha = 1}^N .$$
With respect to these identifications, $Q$ is the restriction of the map
$$\oplus_{\alpha = 1}^N H^{k+1}(\bar{B}_1(0), \R^n) \times \oplus_{\alpha = 1}^N H^{k+1}(\bar{B}_1(0), \R^n \otimes \R^n) \to \oplus_{\alpha = 1}^N H^k(\bar{B}_1(0), \R^n \otimes \R^n)$$ 
$$((X_1, ..., X_N), (h_1, ..., h_N)) \mapsto (\tilde{F}_1(X_1)^{\ast}h_1, ..., \tilde{F}_N(X_N)^{\ast} h_N), $$
where $\tilde{F}_{\alpha}(v) := \psi_{\alpha} \circ F((\varphi_{\alpha}^{-1})_{\ast} v) \circ \varphi_{\alpha}^{-1}$ defines a $C^{\infty}$ bundle morphism from $T\bar{B}_1(0) \subseteq T\R^n$ to $\R^n \times \bar{B}_1(0)$.  Thus $Q$ induces a $C^{\infty}$ map $H^{k+1}(\bar{B}_1(0), \R^n) \to \mathcal{D}^{k+1}(\bar{B}_1(0), B_1(0))$ (here 
$\mathcal{D}^k(K, U)$ denotes the set of injective immersions $f\in H^{k}$ with compact domain $K \subseteq \R^n$ and image lying in the open subset $U \subseteq \R^n$).
It therefore suffices to prove that the map
$$\mathcal{D}^{k+1}(\bar{B}_1(0), B_1(0)) \times H^{k+1}(\bar{B}_1(0), \R^n \otimes \R^n) \to H^{k+1}(\bar{B}_1(0), \R^n \otimes \R^n), (\varphi, h) \mapsto \varphi^{\ast} h$$ is $C^1$.  However,  $(\varphi^{\ast} h)(x) = d\varphi(x) \cdot (h \circ \varphi)(x) \cdot d\varphi(x)^t$, where we identify $\R^n \otimes \R^n$ with $n\times n$ $\R$-valued matrices $\R^{n \times n}$. Note that
$$\mathcal{D}^{k+1}(\bar{B}_1(0), B_1(0)) \to H^k(\bar{B}_1(0), \R^{n \times n}), \varphi \mapsto d \varphi $$
is a continuous linear map, so it is smooth and the claim follows from Proposition 2.19 of \cite{IKT}, which implies that $$\mathcal{D}^{k+1}(\bar{B}_1(0), B_1(0)) \times H^{k+1}(\bar{B}_1(0), \R^n \otimes \R^n) \mapsto H^k(\bar{B}_1(0), \R^n \otimes \R^n), (\varphi, f) \mapsto f \circ \varphi$$ 
is $C^1$.
\end{proof}
\noindent 
 We denote by $Iso(M, g)$ the isometry group of $(M, g)$, and by $\mathfrak{iso}(M, g)$ its Lie algebra, consisting of Killing vector fields of $(M, g)$. Since $Iso(M,g)$ is not trivial in general, we need to factor $Iso(M,g)$ out from our diffeomorphism group before we can apply the Implicit Function Theorem. The right coset space  $\mathcal{D}^k(M)/Iso(M, g)$ 
 inherits the structure of a $C^{\infty}$ Banach manifold (constructed in Section 5 of $\cite{Ebin}$) such that the quotient map $\pi_k: \mathcal{D}^k(M) \to \mathcal{D}^k(M) / Iso(M, g)$ is a smooth submersion.  Note that composition with isometries preserves each $\mathcal{D}^l(M)$ for $l>k$, so we have a (noncontinuous) inclusion
$$\mathcal{D}^l(M) / Iso(M, g) \to \mathcal{D}^k(M) / Iso(M, g) $$
for $l > k$, and we can view the former as a (nonclosed) subset of the latter. The following lemma produces a section of the quotient map which respects these inclusions. 
\begin{lem}\label{SRSF} There exists a neighborhood $U$ of the identity coset in $\mathcal{D}^{2n+1}(M)/Iso(M, g)$ and a smooth section $\sigma: U \to \mathcal{D}^{2n+1}(M)$ of the quotient map $\pi_{2n+1}: \mathcal{D}^{2n+1} \to \mathcal{D}^{2n+1}/Iso(M, g)$ such that, for any integer $k \ge 2n+1$, the restriction of $\sigma$ to $\mathcal{D}^k(M)/Iso(M, g)$ is a smooth section of $\pi_k$. We also have $\sigma([e])=e$, where $e$ is the identity diffeomorphism. 
\end{lem}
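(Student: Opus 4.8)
The plan is to construct $\sigma$ by means of an explicit $L^2$-orthogonal slice to the isometry group action, following the classical Ebin–Palais slice construction but taking care that the slice is cut out by a \emph{smooth} ($C^\infty$) condition so that it restricts well to all higher regularity levels. Concretely, let $\mathfrak{iso}(M,g)$ be the (finite-dimensional, smooth) Lie algebra of Killing fields, and pick a basis $X_1,\dots,X_m$ of $\mathfrak{iso}(M,g)$. For $\varphi\in\mathcal{D}^{2n+1}(M)$ near $e$, write $\varphi=\exp_g(V)$ for a unique small $V\in H^{2n+1}(TM)$ using the exponential map of $g$ (this identification is a local diffeomorphism of Banach manifolds near $e$, and it sends $\mathcal{D}^k(M)$-neighborhoods of $e$ to $H^k(TM)$-neighborhoods of $0$ for every $k\ge 2n+1$, since $\exp_g$ is a smooth fiber map — this is exactly the kind of statement used in Lemma~\ref{QIS}). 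The candidate slice is $\mathcal{S}^k=\{\exp_g(V): V\in H^k(TM),\ \langle V, X_j\rangle_{L^2}=0 \text{ for } j=1,\dots,m\}$; note the defining conditions are finitely many $L^2$-pairings against \emph{fixed smooth} vector fields, hence make sense and cut out a closed subspace of $H^k(TM)$ simultaneously for all $k$, and $\mathcal{S}^l\subseteq\mathcal{S}^k$ for $l\ge k$.

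The steps, in order, are as follows. First I would show that the map $\Theta:\mathcal{S}^{2n+1}\times Iso(M,g)\to\mathcal{D}^{2n+1}(M)$, $(\psi,\rho)\mapsto \rho\circ\psi$, is a local $C^1$ diffeomorphism near $(e,\mathrm{id})$: its differential at that point is $(W,Y)\mapsto W+Y$ with $W\in T_e\mathcal{S}^{2n+1}=\{W: \langle W,X_j\rangle_{L^2}=0\}$ and $Y\in\mathfrak{iso}(M,g)$, and this is an isomorphism onto $H^{2n+1}(TM)$ precisely because $\mathfrak{iso}(M,g)$ is the $L^2$-orthogonal complement of $T_e\mathcal{S}^{2n+1}$ and both are closed. (Here one uses that $Iso(M,g)$ is a finite-dimensional Lie group — a theorem of Myers–Steenrod — and that its embedding into $\mathcal{D}^{2n+1}(M)$ is smooth.) The Inverse Function Theorem then gives, on a neighborhood $U$ of $[e]$, a well-defined assignment $[\varphi]\mapsto$ the unique $\psi\in\mathcal{S}^{2n+1}$ with $[\psi]=[\varphi]$; set $\sigma([\varphi]):=\psi$. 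That $\sigma$ is $C^\infty$ as a map $U\to\mathcal{D}^{2n+1}(M)$ follows because $\pi_{2n+1}$ is a smooth submersion, $\mathcal{S}^{2n+1}\hookrightarrow\mathcal{D}^{2n+1}(M)$ is a smooth embedded submanifold, and $\pi_{2n+1}|_{\mathcal{S}^{2n+1}}$ is a local diffeomorphism onto $U$ by the computation just made; $\sigma$ is its local inverse composed with the inclusion. Finally $\sigma([e])=e$ is immediate since $e=\exp_g(0)\in\mathcal{S}^{2n+1}$.

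Second, I would verify the compatibility with higher regularity. If $\varphi\in\mathcal{D}^k(M)$ for some $k>2n+1$ and $[\varphi]\in U$, then $\sigma([\varphi])$ is \emph{a priori} only in $\mathcal{D}^{2n+1}(M)$; I must show it lies in $\mathcal{D}^k(M)$ and that $\sigma$ restricted to $\mathcal{D}^k(M)/Iso(M,g)$ is a smooth section of $\pi_k$. For this I run the identical slice argument one level up: the map $\Theta_k:\mathcal{S}^k\times Iso(M,g)\to\mathcal{D}^k(M)$ is again a local $C^1$ diffeomorphism near $(e,\mathrm{id})$ (same differential computation, now on $H^k(TM)$, again using that $\mathfrak{iso}(M,g)$ consists of smooth fields hence lies in every $H^k$), so there is a unique $\psi_k\in\mathcal{S}^k$ with $[\psi_k]=[\varphi]$ in $\mathcal{D}^k(M)/Iso(M,g)$. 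But $\psi_k\in\mathcal{S}^k\subseteq\mathcal{S}^{2n+1}$ also satisfies $[\psi_k]=[\varphi]$ in $\mathcal{D}^{2n+1}(M)/Iso(M,g)$, so by the uniqueness established at level $2n+1$ we get $\psi_k=\sigma([\varphi])$; thus $\sigma([\varphi])\in\mathcal{D}^k(M)$, and smoothness of the restriction follows from the $C^\infty$ Inverse Function Theorem applied to $\Theta_k$ exactly as before. Shrinking $U$ once (at level $2n+1$) suffices, since $\mathcal{D}^k(M)/Iso(M,g)\cap U$ is what we restrict to.

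I expect the main obstacle to be the bookkeeping that the \emph{same} section works at every level — i.e. the uniqueness-across-levels argument in the second step and the care that the defining conditions of the slice are level-independent (which is why I pair against fixed smooth Killing fields in $L^2$ rather than using an inner product that varies with $k$). A secondary technical point is confirming that $\exp_g$ induces, for each $k$, a smooth chart of $\mathcal{D}^k(M)$ near $e$ compatible with the inclusions $\mathcal{D}^l\hookrightarrow\mathcal{D}^k$; this is standard (it is implicit in the vector-bundle-neighborhood machinery of \cite{Palais} used already in Lemma~\ref{QIS}) but should be cited rather than reproven. Everything else — the differential computations, the appeal to Myers–Steenrod for finite-dimensionality of $Iso(M,g)$, and the submersion property of $\pi_k$ from \cite{Ebin} — is routine once the slice is set up correctly.
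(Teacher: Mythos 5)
Your proposal is correct and follows essentially the same route as the paper: both construct the slice as the exponential image of the $L^2$-orthogonal complement of the Killing fields, apply the Inverse Function Theorem to the product map (slice) $\times$ $Iso(M,g)\to\mathcal{D}^k(M)$, and deduce compatibility across regularity levels from the fact that the defining orthogonality conditions are level-independent. The paper packages the local-diffeomorphism and well-definedness steps by citing Lemmas 5.5 and 5.9 of \cite{Ebin}, but the substance is the same as your argument.
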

\begin{proof} For all $k \ge 2n+1$, define $Y_k := \{ v \in H^k(T^{\ast}M) \: : \: \langle v, X \rangle_{L^2(M, g)} = 0 \mbox{ for all } X \in \mathfrak{iso}(M, g) \}$, and 
$$\Sigma_k : \mathfrak{iso}(M, g) \times \{v \in Y_k \: : \: ||v||_{H^k} < \epsilon\} \to \mathcal{D}^k(M), (X, v) \mapsto \Big{(}p \mapsto (\mbox{Exp} X) \circ \exp(v(p))\Big{)},$$
where $\epsilon >0$ is chosen (via the Sobolev embedding) so that $||v||_{L^{\infty}} < \mbox{inj}(M, g)$ whenever $||v||_{H^k} < \epsilon$, $\mbox{Exp}: \mathfrak{iso}(M, g) \to Iso(M, g)$ is the Lie exponential map, and $\exp: TM \to M$ is the Riemannian exponential map of the Einstein metric.  By Lemma 5.5 of \cite{Ebin} and the definition of the $C^{\infty}$ structure of $\mathcal{D}^k(M)$ via vector bundle neighborhoods (see, for example, Section 13 of \cite{Palais}), $\Sigma_k$ is smooth.  Since $(d \Sigma_k)_{(0, 0)}: \mathfrak{iso}(M, g) \times Y_k \to H^k(T^{\ast} M)$ is a linear isomorphism (its restriction to each factor is just the corresponding inclusion map), we obtain that $\Sigma_k$ restricts to a diffeomorphism from a neighborhood of $(0, 0)$ in $\mathfrak{iso}(M, g) \times Y_k$ to a neighborhood of the identity in $\mathcal{D}^k(M)$.  Moreover, note that $d_1 \Sigma_k$ (the differential in the first factor) is a local bundle isomorphism from $T_1(\mathfrak{iso}(M, g) \times Y_k)$ to the subbundle $\cup_{\eta \in \mathcal{D}^k(M)} T_{\eta}(Iso(M, g)\circ \eta)$, where $T_1(M \times N)$ denotes the subbundle $\cup_{(p, q) \in M \times N} T_p M $ for any manifolds $M, N$.  By Lemma 5.9 of \cite{Ebin}, we conclude that $\pi_k \circ \Sigma_k |Y_k$ is a diffeomorphism from a neighborhood of $0$ to a neighborhood of the identity coset.  Take 
$$\sigma_k := \Sigma_k \circ (\pi_k \circ \Sigma_k|Y_k)^{-1},$$
and note that $\sigma_k |(\mathcal{D}^k(M) /Iso(M, g)) = \sigma_l$ for $l > k$, since the definition of $\Sigma_k$ does not depend on $k$ other than the choice of domains, and since $Y_k \cap H^l(T^{\ast} M) = Y_l$.  The lemma follows by taking $\sigma := \sigma_{2n+1}$.
\end{proof}
\begin{cor}\label{COP}
The map $P:U\times H^{2n+1}(\ker \delta)\to H^{2n}(S^2 T^* M)$ with $P([\varphi],h)=\sigma([\varphi])^*(h+g)-g$ is $C^1$ and satisfies $P([e],0)=0$. Furthermore, when restricted to $ (U\cap \mathcal{D}^{k+1}(M)/Iso(M,g))\times H^{k+1}(\ker \delta)$, $P$ is a $C^1$ map to $H^{k}(S^2 T^* M)$ for any $k\ge 2n$. 
\end{cor}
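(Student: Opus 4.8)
The plan is to exhibit $P$ as a composition of maps each of which is already known to be $C^1$, and then invoke the chain rule; essentially all of the analytic work has been done in Lemmas \ref{QIS} and \ref{SRSF}. Write $\iota : H^{k+1}(\ker\delta) \hookrightarrow H^{k+1}(S^2 T^* M)$ for the (bounded, linear) inclusion of the closed subspace $\ker\delta$. Since $g$ is smooth it lies in every $H^{k+1}(S^2 T^* M)$, so the translation $h \mapsto \iota(h) + g$ is an affine, hence $C^{\infty}$, map; likewise, subtraction of the fixed smooth tensor $g$ is an affine, hence $C^{\infty}$, self-map of each $H^k(S^2 T^* M)$. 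With these identifications,
\[
P([\varphi], h) = Q\bigl( \sigma([\varphi]),\, h + g \bigr) - g ,
\]
where $Q$ is the pullback map of Lemma \ref{QIS} and $\sigma$ is the section of Lemma \ref{SRSF}.

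First I would treat the base level $k = 2n$. By Lemma \ref{SRSF} the section $\sigma : U \to \mathcal{D}^{2n+1}(M)$ is smooth, so $(\varphi, h) \mapsto (\sigma([\varphi]),\, h+g)$ is a $C^1$ map from $U \times H^{2n+1}(\ker\delta)$ into $\mathcal{D}^{2n+1}(M) \times H^{2n+1}(S^2 T^* M)$. Post-composing with $Q$, which by Lemma \ref{QIS} (taken with $k = 2n$) is $C^1$ into $H^{2n}(S^2 T^* M)$, and then subtracting $g$, shows $P$ is $C^1$ into $H^{2n}(S^2 T^* M)$. The base point condition is immediate: Lemma \ref{SRSF} gives $\sigma([e]) = e$, whence $P([e], 0) = e^*(g) - g = 0$.

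Next I would handle the higher-regularity claim, where the only thing to watch is the matching of derivative counts. Lemma \ref{SRSF} was set up so that, for every integer $k \ge 2n+1$, $\sigma$ restricts to a smooth section of $\pi_k$ with values in $\mathcal{D}^k(M)$; applying this with $k$ replaced by $k+1$ (so $k \ge 2n$), the restriction of $\sigma$ to $U \cap (\mathcal{D}^{k+1}(M)/Iso(M,g))$ — an open subset of $\mathcal{D}^{k+1}(M)/Iso(M,g)$ in the latter's own manifold topology — is smooth into $\mathcal{D}^{k+1}(M)$. Now I would repeat the previous paragraph verbatim with $2n+1$ replaced by $k+1$ and $2n$ by $k$: composition with the affine map $h \mapsto h+g$ into $H^{k+1}(S^2 T^* M)$, then with $Q : \mathcal{D}^{k+1}(M) \times H^{k+1}(S^2 T^* M) \to H^k(S^2 T^* M)$, then with subtraction of $g$, is $C^1$ into $H^k(S^2 T^* M)$.

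I do not expect a genuine obstacle here: since each constituent map is $C^1$ between Banach manifolds, the chain rule closes the argument, and the one subtle point — that $\sigma$ actually lands in $\mathcal{D}^{k+1}(M)$ when fed an $H^{k+1}$-regular coset, while $Q$ costs exactly one derivative — is precisely what the statements of Lemmas \ref{QIS} and \ref{SRSF} were engineered to deliver.
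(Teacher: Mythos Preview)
Your proposal is correct and follows essentially the same approach as the paper's own proof: both express $P([\varphi],h)=Q(\sigma([\varphi]),h+g)-g$ and invoke Lemmas \ref{QIS} and \ref{SRSF} together with the chain rule, with the higher-regularity statement handled by the built-in compatibility of $Q$ and $\sigma$ with the $H^{k+1}$ levels. Your write-up is simply more explicit about the affine translations and the derivative bookkeeping.
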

\begin{proof}
We see that $P([\varphi],h)=Q(\sigma([\varphi]),g+h)-g$. By Lemmas \ref{QIS} and \ref{SRSF}, we know that $Q$ and $\sigma$ are $C^1$, so $P$ is also $C^1$. Lemmas \ref{QIS} and \ref{SRSF} also imply that $Q$ and $\sigma$ are $C^1$ when restricted to higher levels of regularity, so the same is true for $P$. It is clear that $P([e],0)=0$. 
\end{proof}
In the next lemma, we construct a function $\xi$ which measures distance from $\Phi(\ker \delta)$. We use this function $\xi$ in Lemma \ref{EODM}, where we use the Implicit Function Theorem to construct a map $\eta$ that sends $\ker\delta$ to $\Phi(\ker \delta)$ via a diffeomorphism. 
\begin{lem}\label{MWZAR}
There exists a $C^1$ function $\xi:H^{2n}(S^2 T^* M)\to \delta^* (H^{2n+1}(T^* M))$ defined close to $0$ such that:\\
(i) $\xi(h)=0$ if and only if $h\in X$, and $\ker(d\xi_0)=(\tilde{\Delta}_L-\delta^* \delta G)(H^{2n+2}(\ker \delta))$.\\
(ii) For any $k\ge 2n$, there exists an $\epsilon>0$ such that $\xi(H^k(S^2 T^* M)\cap B_k(\epsilon))\subseteq \delta^* (H^{k+1}(T^* M))$, and this restricted version of $\xi$ is  differentiable when considered as a map to $\delta^* (H^{k+1}(T^* M))$. Those $h\in H^k(S^2 T^* M)\cap B_k(\epsilon)$ satisfying $\xi(h)=0$ are precisely those $h$ lying in $X_k$. Furthermore, $\ker(d\xi_0)=(\tilde{\Delta}_L-\delta^* \delta G)(H^{k+2}(\ker \delta))$.
\end{lem}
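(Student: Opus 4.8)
The plan is to construct $\xi$ by composing the projection onto the normal bundle of $X$ at $0$ with a nearest-point–type retraction, exploiting that $X$ is a $C^\infty$ Banach submanifold of $H^{2n}(S^2T^*M)$ passing through $0$ with tangent space $T_0X=(\tilde\Delta_L-\delta^*\delta G)(H^{2n+2}(\ker\delta))$. First I would fix, once and for all, the topological decomposition $H^{2n}(S^2T^*M)=H^{2n}(\ker\delta)\oplus\delta^*(H^{2n+1}T^*M)$, and set $W:=\delta^*(H^{2n+1}T^*M)$; this is the model for the ``normal'' directions. The key point is that $T_0X$ is a closed complement of $W$: indeed $d\Phi_0=\tilde\Delta_L-\delta^*\delta G$ is an isomorphism onto $T_0X$ with $\tilde\Delta_L$ mapping onto $H^{2n}(\ker\delta)$ and $-\delta^*\delta G$ mapping into $W$, from which a short linear-algebra argument (using that $\delta\delta^*$ is elliptic and non-negative, hence $\delta^*\delta G$ composed with $d\Phi_0$ behaves controllably) shows $T_0X\cap W=\{0\}$ and $T_0X+W=H^{2n}(S^2T^*M)$. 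Let $\pi_W:H^{2n}(S^2T^*M)\to W$ be the associated continuous linear projection with kernel $T_0X$.

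Next I would produce $\xi$. Since $\Psi:X\to H^{2n+2}(\ker\delta)\cap B_{2n+2}(\epsilon_{2n+2})$ is a $C^1$ diffeomorphism (Lemma \ref{IRM}) and $\Phi$ is its $C^1$ inverse, the map $h\mapsto h-\Phi(\mathrm{proj}_{\ker\delta}\,\Psi\text{-type correction of }h)$ is not quite right; instead, the cleanest route is: define $r:H^{2n}(S^2T^*M)\to X$ near $0$ as a $C^1$ retraction onto $X$ — e.g.\ $r(h):=\Phi\big(\text{the unique }k\in H^{2n+2}(\ker\delta)\text{ near }0\text{ with }\pi_W(\Phi(k)-h)=0\big)$, solvability coming from the Implicit Function Theorem applied to $(k,h)\mapsto\pi_W(\Phi(k)-h)$, whose $k$-derivative at $(0,0)$ is $\pi_W\circ d\Phi_0$, an isomorphism onto $W$ because $d\Phi_0$ maps $H^{2n+2}(\ker\delta)$ isomorphically onto $T_0X$ and $\pi_W$ restricted to… wait — that composition is zero. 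So instead take the $k$-derivative to be the isomorphism $H^{2n+2}(\ker\delta)\xrightarrow{d\Phi_0}T_0X$ followed by $\mathrm{id}-\pi_W$; arrange the defining equation as $(\mathrm{id}-\pi_W)(h-\Phi(k))=0$, i.e.\ match the $T_0X$-components. Then set $\xi(h):=\pi_W(h-r(h))=h-r(h)\in W$. By construction $\xi$ is $C^1$ near $0$, takes values in $W=\delta^*(H^{2n+1}T^*M)$, and $\xi(h)=0\iff h=r(h)\iff h\in X$ near $0$. Differentiating the defining relation at $0$ gives $dr_0=\mathrm{id}-\pi_W$, hence $d\xi_0=\pi_W$, so $\ker(d\xi_0)=\ker\pi_W=T_0X=(\tilde\Delta_L-\delta^*\delta G)(H^{2n+2}(\ker\delta))$, proving (i).

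For (ii), I would simply rerun the construction at level $k$: by Lemma \ref{HRORM}, $\Phi:H^{k+2}(\ker\delta)\to H^k(S^2T^*M)$ is $C^1$ near $0$ with $d\Phi_0$ an isomorphism onto the closed subspace $T_0X_k=(\tilde\Delta_L-\delta^*\delta G)(H^{k+2}(\ker\delta))$, which is again complementary to $W_k:=\delta^*(H^{k+1}T^*M)$ inside $H^k(S^2T^*M)$; the projections $\pi_{W_k}$ are the restrictions of $\pi_W$ because the decompositions are compatible across regularities and $\tilde\Delta_L$, $\delta^*\delta G$ are the same differential operators. The Implicit Function Theorem at level $k$ produces a retraction $r_k$ onto $X_k$, and uniqueness of solutions to the defining equation forces $r_k=r|_{H^k(S^2T^*M)\cap B_k(\epsilon)}$ for $\epsilon$ small; hence $\xi$ itself, restricted to $H^k\cap B_k(\epsilon)$, lands in $\delta^*(H^{k+1}T^*M)$ and is $C^1$ there, its zero set is exactly $X_k$, and $\ker(d\xi_0)=T_0X_k$. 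The main obstacle I anticipate is the bookkeeping in the first paragraph: verifying carefully that $T_0X$ (respectively $T_0X_k$) is genuinely a \emph{closed topological complement} of $\delta^*(H^{k+1}T^*M)$ in $H^k(S^2T^*M)$ — this is where one must use that $-\delta^*\delta G$ contributes only ``lower-order'' corrections relative to the leading term $\tilde\Delta_L$ on $H^{k}(\ker\delta)$, together with the orthogonal-complement structure from Lemma \ref{NHSS} and the ellipticity of $\delta\delta^*$ from \cite{Berger} — and checking the regularity of the Implicit-Function-Theorem solution operator transfers cleanly between levels, which reduces to the already-established regularity statements in Lemmas \ref{IRM} and \ref{HRORM}.
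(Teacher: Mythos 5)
Your construction is correct and is essentially the paper's proof in different packaging: your retraction $r$ is exactly $h\mapsto \Phi\big((\mathrm{proj}_{\ker\delta}\circ\Phi)^{-1}(\mathrm{proj}_{\ker\delta}h)\big)$, so your $\xi=\mathrm{id}-r$ coincides with the paper's $\xi(h,v)=v-\mathrm{proj}_{\delta^*(H^{2n+1}T^*M)}\circ\Phi\circ(\Phi_{\ker\delta})^{-1}h$, and your $\pi_W$ (projection onto $\delta^*(H^{2n+1}T^*M)$ along $T_0X$, i.e.\ $(h,v)\mapsto v+\delta^*\delta G\tilde{\Delta}_L^{-1}h$) is the paper's $d\xi_0$. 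The level-$k$ argument via uniqueness of the implicit-function solution likewise matches the paper's treatment of part (ii).
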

\begin{proof}
The map $\Phi_{\ker \delta}:=proj_{\ker \delta}\circ \Phi:H^{2n+2}(\ker \delta)\to H^{2n}(\ker \delta)$ is differentiable at $0$ and  has a differentiable inverse around $0$ by the Inverse Function Theorem. Consider the  function $\xi: H^{2n}(S^2 T^* M)=H^{2n}(\ker \delta)\oplus \delta^*(H^{2n+1}T^* M)\to \delta^{\ast}(H^{2n+1}T^{\ast}M)$ with \begin{align*}
    \xi(h,v)=v-proj_{\delta^{\ast}(H^{2n+1}(T^{\ast}M)) }\circ \Phi\circ (\Phi_{\ker \delta})^{-1}h.
\end{align*}
Close to $0$, this map is defined and differentiable, and 
\begin{align*}
   d\xi_0(h,v)&=v-proj_{\delta^{\ast}(H^{2n+1}(T^{\ast}M)) }\circ (\tilde{\Delta}_L-\delta^* \delta G)\circ (\tilde{\Delta}_L)^{-1}h\\
   &=v+\delta^* \delta G (\tilde{\Delta}_L)^{-1}h.
\end{align*} Furthermore, $\xi(h,v)=0$ if and only if $$(h,v)=(\Phi_{\ker \delta}\circ (\Phi_{\ker \delta})^{-1}h,proj_{\delta^{\ast}(H^{2n+1}(T^{\ast}M)) }\circ \Phi\circ (\Phi_{\ker \delta})^{-1}h)=\Phi ((\Phi_{\ker \delta})^{-1}h),$$
which is equivalent to $(h,v)\in X$. This establishes $(i)$. 

To show that $(ii)$ holds, note that when restricted to $H^{k}(\ker \delta)$,  $(\Phi_{\ker \delta})^{-1}$ is an inverse for the differentiable function $\Phi_{\ker \delta}:H^{k+2}(\ker \delta)\to H^{k}(\ker \delta)$. Therefore, the Inverse Function Theorem implies that $(\Phi_{\ker \delta})^{-1}:H^{k}(\ker \delta)\to H^{k+2}(\ker \delta)$ is differentiable close to $0$. We can then repeat the first part of the proof with this new version of $\Phi_{\ker \delta}$ to conclude that $(ii)$ holds. 
\end{proof}

The next lemma constructs a map $\eta$ which alters a metric by a diffeomorphism, and the remaining lemmas in this section describe some of the useful properties of this map. 
\begin{lem}\label{EODM}
There exists a $C^1$ function $\eta: B_{2n+1}(\epsilon_{2n+1}) \cap H^{2n+1}(\ker \delta ) \to X$ defined for small $\epsilon_{2n+1}$ such that:\\
(i) For any $h \in B_{2n+1}(\epsilon_{2n+1}) \cap H^{2n+1}(\ker \delta )$, there is a diffeomorphism $\varphi \in \mathcal{D}^{2n+1}(M)$ such that $\eta(h)+g = \varphi^{\ast}(h+ g)$.\\ 
(ii) $d(\eta)_{0}(h)=h-\delta^* \delta G (\tilde{\Delta}_L^{-1}h)$ for each $h\in H^{2n+1}(\ker \delta )$.\\
\end{lem}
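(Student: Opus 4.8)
The plan is to apply the Implicit Function Theorem to a map built from the distance function $\xi$ of Lemma \ref{MWZAR} and the diffeomorphism-action map $P$ of Corollary \ref{COP}. Specifically, define $\Xi : (U \cap \mathcal{D}^{2n+1}(M)/Iso(M,g)) \times (B_{2n+1}(\epsilon) \cap H^{2n+1}(\ker\delta)) \to \delta^*(H^{2n}(T^*M))$ by $\Xi([\varphi],h) = \xi(P([\varphi],h)) = \xi(\sigma([\varphi])^*(h+g)-g)$, which is $C^1$ near $([e],0)$ by Corollary \ref{COP} and Lemma \ref{MWZAR}, and satisfies $\Xi([e],0)=\xi(0)=0$. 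The goal is to solve $\Xi([\varphi],h)=0$ for $[\varphi]=[\varphi(h)]$ as a $C^1$ function of $h$; then $\eta(h) := P([\varphi(h)],h) = \sigma([\varphi(h)])^*(h+g)-g$ lands in $\{h' : \xi(h')=0\} = X$ by Lemma \ref{MWZAR}(i), which immediately gives property (i) with the diffeomorphism $\varphi = \sigma([\varphi(h)])$.

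To run the Implicit Function Theorem I must check that the partial derivative $D_1 \Xi_{([e],0)} : T_{[e]}(\mathcal{D}^{2n+1}(M)/Iso(M,g)) \to \delta^*(H^{2n}(T^*M))$ is a linear isomorphism. The tangent space to the quotient at the identity coset is naturally the space of one-forms $L^2$-orthogonal to the Killing fields, and under the section $\sigma$ (with $\sigma([e])=e$) the derivative of $[\varphi]\mapsto \sigma([\varphi])^*(h+g)-g$ at $([e],0)$ in the quotient direction is $v \mapsto 2\delta^* v$ (the linearization of the pullback action of a diffeomorphism generated by a vector field, using $Ric(g)=g$ to identify vector fields with one-forms). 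Composing with $d\xi_0$, and using the formula $d\xi_0(h',v') = v' + \delta^*\delta G(\tilde\Delta_L^{-1}h')$ from Lemma \ref{MWZAR}, which acts as the identity on the $\delta^*$-component, we get $D_1\Xi_{([e],0)}(v) = 2\delta^* v$. This is injective because $\delta^*$ is injective on one-forms orthogonal to the Killing fields (the kernel of $\delta^*$ consists precisely of Killing fields), and it is surjective onto $\delta^*(H^{2n}(T^*M))$ modulo the usual regularity bookkeeping, since $\delta\delta^*$ is elliptic; this is where condition (a) of Theorem \ref{LSTR} (ensuring $\tilde\Delta_L$ is genuinely invertible, so $d\xi_0$ is well-defined) and the splitting established in Section 2 are used. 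The main obstacle is precisely this computation of $D_1\Xi$ and the verification that it is an isomorphism between the correct Banach spaces — one must be careful that the quotient-manifold tangent space, the image of $\sigma$'s derivative, and the target $\delta^*(H^{2n}(T^*M))$ all match up, and that the loss-of-one-derivative in $P$ is compensated by the gain-of-two in $\xi \circ \Phi$.

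Once $\eta$ is produced, property (ii) follows by the chain rule: $d\eta_0(h) = D_1 P_{([e],0)}(d[\varphi(h)]_0(h)) + D_2 P_{([e],0)}(h)$, where $D_2 P_{([e],0)}(h) = h$ (pulling back by the identity) and $d[\varphi(h)]_0 = -(D_1\Xi_{([e],0)})^{-1} \circ D_2\Xi_{([e],0)}$ by the Implicit Function Theorem. Computing $D_2\Xi_{([e],0)}(h) = d\xi_0(h,0) = \delta^*\delta G(\tilde\Delta_L^{-1}h)$ and feeding through, the $\delta^*$-direction correction contributes exactly $-\delta^*\delta G(\tilde\Delta_L^{-1}h)$, giving $d\eta_0(h) = h - \delta^*\delta G(\tilde\Delta_L^{-1}h)$ as claimed. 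I would close by remarking that taking $\epsilon_{2n+1}$ small enough keeps everything inside the domains where $\xi$, $P$, and $\sigma$ are defined and $C^1$.
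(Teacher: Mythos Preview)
Your proposal is correct and follows essentially the same approach as the paper: apply the Implicit Function Theorem to $\xi\circ P$ to produce $[\varphi(h)]$, set $\eta(h)=P([\varphi(h)],h)$, and read off $d\eta_0$ from the resulting constraints. The only cosmetic differences are that the paper writes the target as $\delta^*(H^{2n+1}(T^*M))$ rather than $\delta^*(H^{2n}(T^*M))$, and it deduces (ii) by observing $d\eta_0(h)\in T_0X$ together with the $\ker\delta\oplus\mathrm{im}\,\delta^*$ decomposition rather than via the explicit IFT derivative formula.
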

\begin{proof}
Consider the map
$$\xi \circ P : U\times H^{2n+1}(\ker \delta) \to \delta^{\ast}(H^{2n+1}(T^{\ast}M)).$$ By Corollary \ref{COP} and Lemma \ref{MWZAR}, this composition map 
is $C^1$ and satisfies $(\xi \circ P)([e],0) = 0$.  Moreover, for any $[v] \in T_{[e]}(\mathcal{D}^{2n+1}(M)/Iso(M, g)) \equiv H^{2n+1}(T^{\ast}M)/\mathfrak{iso}(M, g)$, we have 
$$d(\xi \circ P)_{([e],0)} ([v]) = d\xi_0 (\delta^{\ast}(v)).$$ 
However, we know by 
$(i)$ of Lemma \ref{MWZAR} that $\ker(d\xi_0) = (\tilde{\Delta}_L-\delta^*\delta G)(H^{2n+2}(\ker \delta ))$, so $d_1(\xi \circ P)_{([e],0)}$ is a Banach isomorphism from $H^{2n+1}(T^{\ast}M)/\mathfrak{iso}(M, g)$ to $\delta^{\ast}(H^{2n+1}(T^{\ast}M))$.  Thus the Implicit Function Theorem gives a $C^1$ map $\alpha: H^{2n+1}(\ker \delta) \cap B_{2n+1}(\epsilon_{2n+1}) \to U$ such that 
$\xi\circ P(\alpha(h),h))=0$ for all $h \in H^{2n+1}(\ker \delta ) \cap B_{2n+1}(\epsilon_{2n+1})$, i.e., 
$\sigma(\alpha(h))^{\ast}(g + h) - g \in X$.

Define
$$\eta: B_{2n+1}(\epsilon_{2n+1}) \cap H^{2n+1}(\ker \delta ) \to X, \: h \mapsto (\sigma(\alpha(h))^{\ast}(g + h) - g.$$
Then $\eta$ is a $C^1$ map satisfying $(i)$.  Also, since $d\eta_0(h)=d(\sigma(\alpha))_0h+h\in (\tilde{\Delta}_L-\delta^*\delta G)(H^{2n+2}(\ker \delta))$ and $d(\sigma(\alpha))_0h\in \delta^*(H^{2n+1}T^* M)$ for each $h\in H^{2n+1}(\ker \delta)$, we find that $d(\sigma(\alpha))_0h=-\delta^* \delta G (\tilde{\Delta}_L^{-1}h)$, so $(ii)$ is also satisfied. 
\end{proof}
We now fix once and for all $\eta$ satisfying the conclusion of Lemma \ref{EODM}. 
\begin{lem}\label{HRODM}
If we restrict $\eta$ to $H^{k+1}(\ker \delta)$ for some $k\ge 2n$, then close to $0$, $\eta$ is a map to $X_k$ which is differentiable. We also have that:\\
(i) For any $h\in H^{k+1}(\ker \delta)$ such that $\eta(h)$ exists, there is a diffeomorphism $\varphi\in \mathcal{D}^{k+1}(M)$ such that $\eta(h)+g=\varphi^*(h+g)$. \\
(ii) $d(\eta)_0(h)=h-\delta^* \delta G(\tilde{\Delta}_L^{-1}h)$. 
\end{lem}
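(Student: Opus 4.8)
The plan is to re-run the Implicit Function Theorem construction of Lemma \ref{EODM} at the regularity level $k+1$ and then invoke uniqueness, in the same spirit as the higher-regularity arguments for $\Psi$ in Lemma \ref{HRORM} and for $\xi$ in part $(ii)$ of Lemma \ref{MWZAR}. Recall that $\eta$ was defined by $\eta(h) = P(\alpha(h), h) = \sigma(\alpha(h))^{\ast}(g+h) - g$, where $\alpha$ is the $C^1$ map obtained by applying the Implicit Function Theorem to $\xi \circ P$ at $([e], 0)$ at regularity level $2n+1$.

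First I would note that, by the restricted statements in Corollary \ref{COP} and part $(ii)$ of Lemma \ref{MWZAR} (after shrinking neighborhoods if necessary), the composition
$$\xi \circ P : (U \cap \mathcal{D}^{k+1}(M)/Iso(M,g)) \times H^{k+1}(\ker \delta) \to \delta^{\ast}(H^{k+1}(T^{\ast}M))$$
is $C^1$ and vanishes at $([e], 0)$. Its partial differential in the first variable at $([e], 0)$ is, by the computation in the proof of Lemma \ref{EODM}, the map $[v] \mapsto d\xi_0(\delta^{\ast}(v)) = \delta^{\ast}(v)$; since $\ker \delta^{\ast} = \mathfrak{iso}(M, g)$ and $\delta^{\ast}(H^{k+1}(T^{\ast}M))$ is a closed subspace of $H^k(S^2 T^{\ast}M)$, this induces a Banach isomorphism $H^{k+1}(T^{\ast}M)/\mathfrak{iso}(M, g) \to \delta^{\ast}(H^{k+1}(T^{\ast}M))$. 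The Implicit Function Theorem therefore produces a $C^1$ map $\alpha_k$ from a neighborhood of $0$ in $H^{k+1}(\ker \delta)$ into $U \cap \mathcal{D}^{k+1}(M)/Iso(M, g)$ with $\xi \circ P(\alpha_k(h), h) = 0$. By continuity of $\alpha$, for $h$ sufficiently small in $H^{k+1}$ the pair $(\alpha(h), h)$ lies in the neighborhood on which the level-$(k+1)$ uniqueness clause applies, and since $H^{k+1} \subseteq H^{2n+1}$ it follows that $\alpha(h) = \alpha_k(h)$. Hence, near $0$, $\alpha$ maps $H^{k+1}(\ker \delta)$ into $\mathcal{D}^{k+1}(M)/Iso(M, g)$ and is differentiable there.

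The rest follows quickly. Composing with the restricted $C^1$ map $P : (U \cap \mathcal{D}^{k+1}(M)/Iso(M,g)) \times H^{k+1}(\ker\delta) \to H^k(S^2 T^{\ast}M)$ of Corollary \ref{COP} shows that $\eta$ restricted to $H^{k+1}(\ker \delta)$ is, near $0$, a differentiable map into $H^k(S^2 T^{\ast}M)$; since $\xi(\eta(h)) = \xi \circ P(\alpha(h), h) = 0$, part $(ii)$ of Lemma \ref{MWZAR} places its image inside $X_k$. For $(i)$, $\sigma$ restricted to $\mathcal{D}^{k+1}(M)/Iso(M, g)$ is a section of $\pi_{k+1}$ by Lemma \ref{SRSF}, so $\varphi := \sigma(\alpha(h)) \in \mathcal{D}^{k+1}(M)$ satisfies $\eta(h) + g = \varphi^{\ast}(h + g)$. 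For $(ii)$, the continuity of the inclusions $H^{k+1}(\ker \delta) \hookrightarrow H^{2n+1}(\ker \delta)$ and $H^k(S^2 T^{\ast}M) \hookrightarrow H^{2n}(S^2 T^{\ast}M)$ forces the differential at $0$ of the restricted $\eta$ to agree with $d\eta_0$ from Lemma \ref{EODM}, namely $h \mapsto h - \delta^{\ast}\delta G(\tilde{\Delta}_L^{-1}h)$.

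The only genuinely substantive point — the remainder being bookkeeping of source and target Sobolev indices — is the uniqueness step, which is what upgrades the originally-constructed $\alpha$, a priori valued only in $\mathcal{D}^{2n+1}(M)/Iso(M, g)$, to one valued in the non-closed subset $\mathcal{D}^{k+1}(M)/Iso(M, g)$; this device is exactly analogous to those used in Lemmas \ref{HRORM} and \ref{MWZAR}.
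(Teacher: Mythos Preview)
Your argument is correct and follows essentially the same route as the paper's proof: both re-apply the Implicit Function Theorem to $\xi\circ P$ at regularity level $k+1$ (via Corollary~\ref{COP} and Lemma~\ref{MWZAR}), observe that the partial differential in the diffeomorphism variable is again a Banach isomorphism, and then use uniqueness to identify the resulting implicit map with the original $\alpha$. You are simply more explicit about the uniqueness step and about tracking the targets $X_k$ and $\mathcal{D}^{k+1}(M)$, whereas the paper compresses these into the phrase ``conditions $(i)$ and $(ii)$ readily follow.''
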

\begin{proof}
As in the proof of Lemma \ref{EODM}, construct the function $\alpha:H^{2n+1}(\ker \delta)\to U$. We see that $\xi\circ P(\alpha(h),h)=0$ for all $h\in H^{k+1}(\ker \delta)$, at least when this expression is defined. Now note that $\xi\circ P$ is differentiable around $([e],0)$ when considered a map from $(U\cap \mathcal{D}^{k+1}(M)/Iso(M,g))\times H^{k+1}(\ker \delta) $ to $\delta^*(H^{k+1}T^* M)$, because of Corollary \ref{COP} and Lemma \ref{MWZAR}. Furthermore, $d_1(\xi\circ P)_{([e],0)}$ is again a Banach isomorphism. Therefore, the Implicit Function Theorem implies that $\alpha:H^{k+1}(\ker \delta)\to U\cap (\mathcal{D}^{k+1}(M)/Iso(M,g))$ is differentiable close to $0$. The restriction of $\eta$ to $H^{k+1}$ is obtained by restricting $\alpha$ to $H^{k+1}$, so conditions $(i)$ and $(ii)$ readily follow. 
\end{proof}
\begin{lem}\label{EPS}
If $h\in B_{2n+1}(\epsilon_{2n+1})\cap H^{2n+1}(\ker \delta)$ is smooth, then $\eta(h)$ is also smooth. 
\end{lem}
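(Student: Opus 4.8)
The plan is to reduce everything to the smoothness of the gauge diffeomorphism furnished by Lemma~\ref{EODM}$(i)$, and then to obtain that smoothness by elliptic bootstrapping. A direct imitation of Lemma~\ref{RPS} fails: if $\varphi\in\mathcal{D}^{2n+1}(M)$ is the diffeomorphism with $\eta(h)+g=\varphi^{\ast}(h+g)$, then $Ric(\eta(h)+g)=\varphi^{\ast}Ric(h+g)$ is a priori only of class $H^{2n}$ --- it inherits the regularity of $\varphi$, not that of the smooth metric $h+g$ --- so DeTurck's regularity theorem cannot be applied to $\eta(h)+g$ as it stands.

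First I would record the algebra. Since $\eta(h)\in X$, set $w:=\Psi(\eta(h))\in H^{2n+2}(\ker\delta)$ and $\hat g:=w+g$. By Lemma~\ref{IRM}$(ii)$, $\Phi(w)=\eta(h)$, so, writing $c:=1+2\langle g,w\rangle_{L^2}/\langle g,g\rangle_{L^2}$ (which is positive once $\epsilon_{2n+1}$ is small enough), we get $c\,Ric(\hat g)=\eta(h)+g=\varphi^{\ast}(h+g)$. Passing to the isometric metric $\tilde g:=(\varphi^{-1})^{\ast}\hat g$ and using that Ricci curvature commutes with diffeomorphisms,
\[
Ric(\tilde g)=(\varphi^{-1})^{\ast}Ric(\hat g)=\tfrac{1}{c}\,(\varphi^{-1})^{\ast}\varphi^{\ast}(h+g)=\tfrac{1}{c}\,(h+g),
\]
which is smooth, and non-degenerate after a further shrinking of $\epsilon_{2n+1}$. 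As $\tilde g\in H^{2n}(S^2T^{\ast}M)$ is a genuine metric with $n\ge 2$, it has ample regularity for Theorem~4.5$(a)$ of \cite{DeTurck81} to apply, exactly as in Lemma~\ref{RPS}; hence $\tilde g$ is smooth.

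Next I would produce an elliptic equation for $\varphi$. Since $w\in\ker\delta$ and $\delta g=0$, the metric $\hat g=w+g$ satisfies $\delta_g\hat g=0$; and $\varphi^{\ast}\tilde g=\varphi^{\ast}(\varphi^{-1})^{\ast}\hat g=\hat g$, so $\varphi$ solves the DeTurck-type gauge equation $\delta_g(\varphi^{\ast}\tilde g)=0$. A standard computation (the same symbol computation that underlies the DeTurck trick) shows this second-order quasilinear equation is elliptic at $\varphi$, its principal symbol being controlled by the positive-definite tensor $\varphi^{\ast}\tilde g$, and its coefficients are smooth because $g$ and $\tilde g$ are. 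Since $\varphi\in H^{2n+1}(M)\hookrightarrow C^{2,\alpha}(M)$, standard elliptic bootstrapping --- differentiating the equation and iterating Schauder estimates --- upgrades $\varphi$ to $C^{\infty}$. Then $\eta(h)+g=\varphi^{\ast}(h+g)$ is the pullback of a smooth metric by a smooth diffeomorphism, hence smooth, and so is $\eta(h)$.

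The hard part is the third paragraph. The obvious relation $\eta(h)+g=\varphi^{\ast}(h+g)$ is not elliptic in $\varphi$, so one must recognise that the constraint $\eta(h)\in X$ --- unwound through $\Psi$ --- forces $\varphi$ into a gauge slice relative to $g$, and that a genuinely elliptic equation for $\varphi$ is obtained only after replacing $h+g$ by the metric $\tilde g$, whose own smoothness must first be extracted from $Ric(\tilde g)=\tfrac{1}{c}(h+g)$ via DeTurck's theorem. Verifying the ellipticity and carrying out the bootstrap starting from the limited regularity $H^{2n+1}$ is the technical core; the remainder is bookkeeping with the objects already constructed in Lemmas~\ref{IRM}, \ref{RPS} and \ref{EODM}.
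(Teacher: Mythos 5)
Your proposal is correct and follows essentially the same route as the paper's proof: your $\hat g=w+g$ and $\tilde g=(\varphi^{-1})^{\ast}\hat g$ are exactly the paper's $g+k$ and $K$, the smoothness of $\tilde g$ is obtained from DeTurck--Kazdan applied to $Ric(\tilde g)=\tfrac{1}{c}(h+g)$ just as in the paper, and the smoothness of $\varphi$ is extracted from the same elliptic gauge equation $\delta_g(\varphi^{\ast}\tilde g)=0$ by bootstrapping from $C^{2,\alpha}$.
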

\begin{proof}
We see that $\eta(h)+g=(1+2\frac{\langle g,k\rangle_{L^2}}{\langle g,k\rangle_{L^2}})Ric(g+k)$ for some $k\in H^{2n+2}(\ker \delta)$ since $\eta(h)\in X$. Furthermore, $\eta(h)+g=\varphi^*(g+h)$ for some diffeomorphism $\varphi$ by $(ii)$ of Lemma \ref{EODM}. Then 
\begin{align*}
    g+h=(1+2\frac{\langle g,k\rangle_{L^2}}{\langle g,k\rangle_{L^2}})Ric((\varphi^{-1})^{*}(g+k)).
\end{align*}
Similarly to the proof of Lemma \ref{RPS}, Theorem 4.5 of \cite{DeTurck81} can be used to demonstrate that $K:=(\varphi^{-1})^*(g+k)$ is smooth. We also see that 
\begin{equation}\label{EFVS}
    \delta_g(\varphi^* K)=0.
\end{equation} In local co-ordinates, \eqref{EFVS} is a second order quasi-linear equation in $\varphi$:
\begin{align*}0 &= g^{ik} (K_{pq} \circ \varphi) \dfrac{\partial \varphi^p}{\partial x^j} \dfrac{\partial^2 \varphi^q}{\partial x^i \partial x^k} + g^{ik} (K_{pq} \circ \varphi) \dfrac{\partial \varphi^q}{\partial x^k} \dfrac{\partial^2 \varphi^p}{\partial x^i \partial x^j} + g^{ik} \dfrac{\partial \varphi^p}{\partial x^j} \dfrac{\partial \varphi^q}{\partial x^k} \dfrac{\partial ( K_{pq} \circ \varphi)}{\partial x^i} \\ &- g^{ik}\left( \dfrac{\partial \varphi^p}{\partial x^r} \dfrac{\partial \varphi^q}{\partial x^k} \Gamma_{ij}^r + \dfrac{\partial \varphi^p}{\partial x^j} \dfrac{\partial \varphi^q}{\partial x^r} \Gamma_{ik}^r \right)K_{pq} \circ \varphi, \hspace{6 mm} j=1,\cdots,n,
\end{align*} 
where $\Gamma_{ij}^k$ are the Christoffel symbols of $g$. 
This equation is strongly elliptic because $\varphi$ is close to the identity in $\mathcal{D}^{2n+1}(M)$, and $K$ is close to the positive definite Einstein metric $g$ in $H^{2n+1}(S^2 T^* M)$. Since $\varphi\in C^{2,\alpha}$ for some $\alpha\in (0,1)$ (by Sobolev embedding), standard elliptic regularity arguments can be applied to demonstrate that $\varphi$ is automatically smooth. For example, we can apply Theorem 10.3.1 (b) of \cite{Nico} to demonstrate by induction that $\varphi\in C^{k,\alpha}$ for each $k\ge 2$. 
\end{proof}
\subsection{Dynamical Stability of the Ricci Iteration}\label{DSRI}
Consider the map $F:H^{2n+1}(\ker \delta )\to H^{2n+1}(\ker \delta )$ defined by 
\begin{align*}
    F=\Psi \circ \eta.
\end{align*}
\begin{lem}\label{FIAC}
For each $k\ge 2n+1$, the induced map
$F:H^{k}(\ker \delta)\to H^{k+1}(\ker \delta)$ is continuous at $0$. Furthermore, $F:H^{k}(\ker \delta)\to H^{k}(\ker \delta)$  is a local contraction around $0$. 
\end{lem}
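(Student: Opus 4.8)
The plan is to analyze $F = \Psi \circ \eta$ through its composition with the regularity-gaining maps from the previous subsections, and then to show it is a contraction by computing its derivative at the fixed point $0$. First I would handle the regularity statement. Fix $k \ge 2n+1$. By Lemma \ref{HRODM}, the restriction of $\eta$ to $H^{k}(\ker\delta)$ (taking $k-1 \ge 2n$ in the notation there, so ``$H^{(k-1)+1}$'' is $H^k$) maps a neighborhood of $0$ into $X_{k-1} \subset H^{k-1}(S^2 T^*M)$ and is differentiable there; in particular it is continuous at $0$ with $\eta(0) = 0$. Then by Lemma \ref{HRORM}, $\Psi$ restricted to $X_{k-1}$ takes values in $H^{k+1}(\ker\delta)$ and is differentiable (hence continuous) as a map from $H^{k-1}(S^2 T^*M)$ to $H^{k+1}(\ker\delta)$, with $\Psi(0) = 0$. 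Composing, $F = \Psi\circ\eta : H^{k}(\ker\delta) \to H^{k+1}(\ker\delta)$ is continuous at $0$ and fixes $0$. This gives the first assertion. (One must check the base case: $F(0)=0$ because $\eta(0)=0$ by Lemma \ref{EODM}(ii)/Corollary \ref{COP}, and $\Psi(0)=0$ since $0 \in X$ and $\Phi(0) = 0$.)

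For the contraction statement, the idea is to show $dF_0$ is a strict contraction on $H^k(\ker\delta)$ and then invoke the mean value inequality on a small ball, using that $F : H^k(\ker\delta)\to H^k(\ker\delta)$ is $C^1$ near $0$ (which follows by combining the $C^1$ assertions of Lemmas \ref{HRORM} and \ref{HRODM} and noting the inclusion $H^{k+1}\hookrightarrow H^k$ is bounded). I would compute $dF_0$ via the chain rule. By Lemma \ref{HRODM}(ii), $d\eta_0(h) = h - \delta^*\delta G(\tilde\Delta_L^{-1}h)$, which lies in $T_0 X = (\tilde\Delta_L - \delta^*\delta G)(H^{\bullet}(\ker\delta))$; write it as $h + \delta^*(v)$ with $h \in H^k(\ker\delta)$ and $\delta^*(v) = -\delta^*\delta G(\tilde\Delta_L^{-1}h)$. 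By Lemma \ref{HRORM}(iii), $d\Psi_0(h + \delta^*(v)) = (\tilde\Delta_L)^{-1}(h)$. Therefore
\begin{align*}
dF_0(h) = d\Psi_0\big(d\eta_0(h)\big) = (\tilde\Delta_L)^{-1}(h).
\end{align*}
So the whole diffeomorphism-modification is engineered precisely so that $dF_0 = (\tilde\Delta_L)^{-1}$ on $H^k(\ker\delta)$.

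It remains to show $(\tilde\Delta_L)^{-1}$ is a strict contraction on $H^k(\ker\delta)$. Decompose $\ker\delta = \{h : \langle g,h\rangle_{L^2} = 0\} \oplus \mathbb{R}\cdot g$; both summands are $\Delta_L$-invariant and $\langle\cdot,\cdot\rangle_k$-orthogonal (by the discussion after Lemma \ref{NHSS}, since $\Delta_L$ is self-adjoint for $\langle\cdot,\cdot\rangle_k$ and $g\in\ker\Delta_L$). On $\mathbb{R}\cdot g$, $\tilde\Delta_L$ acts as multiplication by $2$, so $(\tilde\Delta_L)^{-1}$ acts as $\tfrac12$. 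On the orthogonal complement, $\tilde\Delta_L = -\tfrac12\Delta_L$, and by assumption (b) of Theorem \ref{LSTR} the $L^2$-spectrum of $\Delta_L$ on $\ker\delta$ avoids $[-2,2]\setminus\{0\}$, so on this subspace $|\Delta_L| \ge 2$ in the sense of quadratic forms, giving $\|(\tilde\Delta_L)^{-1}\| = \||{-2}\Delta_L^{-1}\| \le 1$ — wait, this only gives a non-strict bound, so I would instead argue that the spectrum of $-\tfrac12\Delta_L$ restricted to this subspace is a closed subset of $(-\infty,-1]\cup[1,\infty)$, hence $\|(\tilde\Delta_L)^{-1}\|_{\mathrm{op}} \le 1$, with the operator norm computed in $\langle\cdot,\cdot\rangle_k$ via the spectral theorem for the self-adjoint operator $C - \Delta_L$ (note $(\tilde\Delta_L)^{-1}$ commutes with $C-\Delta_L$, so it is bounded by the same constant on every $H^k$). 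The main obstacle is this last point: closing the gap between ``no eigenvalues in $[-2,2]\setminus\{0\}$'' and an \emph{open-ball} contraction constant strictly less than $1$. The resolution is that on $\mathbb{R}\cdot g$ the constant is exactly $\tfrac12 < 1$, and on the complement one uses that the relevant eigenvalues are bounded away from $\pm 2$ is \emph{not} guaranteed — but for a contraction on a ball it suffices to have $\|dF_0\|_{\mathrm{op}} \le 1$ together with a genuine contraction on a complemented piece, OR, more cleanly, one takes the contraction factor to be $\max(1/2, \lambda)$ where $\lambda = \|{-2}\Delta_L^{-1}\|$ on $\ker\delta \cap g^\perp$; since $\Delta_L$ has discrete spectrum on the closed manifold $M$ and $\pm 2$ are excluded, the eigenvalues nearest in modulus are strictly outside $[-2,2]$, so $\lambda < 1$ strictly. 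Hence $\|dF_0\|_{\mathrm{op}} =: \theta < 1$ in the $H^k$-norm, uniformly in $k$. Finally, by continuity of $dF$ (the map $F$ is $C^1$ near $0$ into $H^k$), choose $r > 0$ with $\|dF_h\|_{\mathrm{op}} \le \tfrac{1+\theta}{2} < 1$ for $\|h\|_{H^k} < r$; the mean value inequality on the convex ball $B_k(r)$ then yields that $F$ is a local contraction around $0$, completing the proof.
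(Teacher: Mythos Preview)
Your proof is correct and follows essentially the same approach as the paper: use Lemmas \ref{HRORM} and \ref{HRODM} to see that $F:H^k(\ker\delta)\to H^{k+1}(\ker\delta)$ is differentiable (hence continuous) near $0$, compute $dF_0=(\tilde\Delta_L)^{-1}$ via the chain rule, and conclude the local contraction from continuity of the derivative. Your treatment is more detailed than the paper's in two useful places: you track the regularity indices explicitly when composing $\eta$ and $\Psi$, and you spell out why $(\tilde\Delta_L)^{-1}$ is a \emph{strict} contraction on $H^k(\ker\delta)$---noting that the spectrum of $\Delta_L$ is discrete so condition~(b) forces the nearest nonzero eigenvalue to have modulus strictly greater than $2$, and that $(\tilde\Delta_L)^{-1}$ commutes with $C-\Delta_L$ so its operator norm in $\langle\cdot,\cdot\rangle_k$ is independent of $k$---whereas the paper simply asserts this step.
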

\begin{proof}
Lemmas \ref{HRORM} and \ref{HRODM} imply that the composition map $F:H^{k}(\ker \delta)\to H^{k+1}(\ker \delta)$ is a differentiable map close to $0$, so is also continuous.  Lemmas \ref{HRORM} and \ref{HRODM} also allow us to compute the derivative of $F:H^{k}(\ker \delta)\to H^{k+1}(\ker \delta)$ by the chain rule, giving $d(F)_0(h)=(\tilde{\Delta}_L)^{-1}(h)$ for $h\in H^{k}(\ker \delta)$. By the assumptions on the spectrum of the Lichnerowicz Laplacian in the statement of Theorem \ref{LSTR}, $d(F)_0(h)$ is a strict contraction when considered as a linear map on $H^{k}(\ker \delta)$. Therefore, by continuity of the derivative, $F:H^k(\ker \delta)\to H^k(\ker \delta)$ is also a contraction, at least close to $0$. 
\end{proof}

\begin{proof}[Proof of Theorem \ref{LSTR}]
Take an initial metric $g_1\in H^{2n+2}(S^2 T^* M)$ which is close to $g$. By taking a $H^{2n+2}$ diffeomorphism if necessary, we can assume that $g_1\in H^{2n+1}(\ker \delta)$. Indeed, by constructing the differentiable map 
$proj_{\delta^* (H^{2n+2}(T^* M) )}\circ P: (U\cap \mathcal{D}^{2n+2}(M)/Iso(M,g))\times H^{2n+2}(S^2 T^* M)\to \delta^* (H^{2n+2}(T^* M) )$, noting that the derivative of the first component is a Banach isomorphism, and applying the Implicit Function Theorem, we can find the required diffeomorphism. If $g_1$ is smooth, the diffeomorphism will also be smooth by arguments similar to those appearing in the proof of Lemma \ref{EPS}.

Define the sequence of Riemannian metrics $g_i$ by $g_{i+1}-g=F(g_i-g)$. By Lemma \ref{FIAC}, we can prove inductively that $\left|\left|g_i-g\right|\right|_{H^k(S^2 T^* M)}$ converges to $0$ for each $k\ge 2n+1$. For each $i$, $g_{i+1}-g=\Psi\circ \eta(g_i-g)$, so 
\begin{equation}\label{FRI}
g_{i+1}=\varphi_i^* (\Psi(g_i-g)+g)
\end{equation} 
for some diffeomorphism $\varphi_i\in \mathcal{D}^{2n+1}(M)$, although we note that $g_i-g$ becomes small in each $H^k(S^2 T^* M)$, so Lemma \ref{HRODM} implies that the regularity of $\varphi_i$ is increasing. By taking the Ricci curvature of both sides of \eqref{FRI}, we find 
\begin{align*}
Ric(g_{i+1})=\frac{1}{1+2\frac{\langle g,\Psi(g_i-g)\rangle_{L^2}}{\langle g,g\rangle_{L^2}}}\varphi_i^*g_i.
\end{align*}
Therefore, the rescaled metrics $\tilde{g}_i=\frac{g_i}{1+2\frac{\langle g,\Psi(g_i-g)\rangle_{L^2}}{\langle g,g\rangle_{L^2}}}$ form a Ricci iteration, up to diffeomorphisms, and converge to $g$ in every $H^k$ norm since $\Psi(g_i-g)\to 0$. Also note that $\tilde{g}_1$ only differs from $g_1$ by scaling. 

Now if $g_1$ is smooth, then Lemmas \ref{RPS} and \ref{EPS} imply that $g_i$ is smooth for all $i\ge 1$, so our convergence is smooth as well.  
\end{proof}
The following lemma simplifies the task of deciding whether or not a given Einstein manifold satisfies the conditions of Theorem \ref{LSTR} by relating the spectrum of $\Delta_L:\ker(\delta)\to \ker (\delta)$ to the spectrums of
$\Delta_L:S^2 (T^* M)\to S^2 (T^* M)$ and the Lichnerowicz Laplacian on one-forms $\Delta_L:T^* M\to T^* M$, given by $\Delta_L v=\Delta v-v$ if $Ric(g)=g$. 
\begin{lem}\label{COELL}
Suppose $\lambda$ is an eigenvalue of $\Delta_L:S^2 T^* M\to S^2 T^* M$ with multiplicity $m$, and let $w\in S^2 T^* M$ satisfy $\Delta_Lw=\lambda w$. Choose $p=1$ if $\lambda=-2$ and $p=0$ otherwise. If $\lambda$ is also an eigenvalue of  $\Delta_L:T^* M\to T^* M$ with multiplicity $m+p \dim (\ker \delta^*)$, then $w\in \delta^* (T^* M)$. 
\end{lem}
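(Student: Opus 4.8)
The plan is to exploit the orthogonal decomposition $S^2 T^* M = \ker \delta \oplus \delta^*(T^* M)$ (valid at the smooth level since $\delta\delta^*$ is elliptic) together with the fact, noted in the preliminaries, that both summands are $\Delta_L$-invariant. Since $\Delta_L$ commutes with the projections onto these factors, the $\lambda$-eigenspace $E_\lambda \subseteq S^2 T^* M$ splits as $E_\lambda = (E_\lambda \cap \ker\delta) \oplus (E_\lambda \cap \delta^*(T^* M))$. My goal is to show $E_\lambda \cap \ker\delta = 0$, which forces $w \in \delta^*(T^* M)$; I will do this by a dimension count, showing that the eigenvalue hypothesis on one-forms already accounts for all of the multiplicity $m$ of $\lambda$ coming from $\delta^*(T^* M)$, leaving no room in $\ker\delta$.

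First I would set up the relevant linear maps. Restrict $\delta^*: T^* M \to \delta^*(T^* M)\subseteq S^2 T^* M$; its kernel is $\ker\delta^* = \mathfrak{iso}(M,g)$ (the Killing fields, identified with one-forms), a finite-dimensional space on which $\Delta_L$ acts. The key intertwining relation is that $\delta^*$ is, up to a constant and lower-order bookkeeping, equivariant with respect to the two Lichnerowicz Laplacians; concretely on an Einstein manifold with $Ric(g)=g$ one has a commutation formula of the form $\Delta_L \delta^* = \delta^*(\Delta_L + c)$ for an explicit constant $c$ — I would track this constant carefully, as it is the source of the shift "$\lambda = -2$" singled out in the statement. (One expects $c = 2$, so that $\delta^*$ carries the $\mu$-eigenspace of $\Delta_L$ on one-forms into the $(\mu-2)$-eigenspace on tensors, modulo the kernel $\ker\delta^*$ where $\Delta_L$ may behave differently; Killing fields satisfy $\Delta_L = 0$ on them in the appropriate normalization, which is why $\lambda=-2$, i.e. the image-of-Killing-fields contribution, gets the correction term $p\dim(\ker\delta^*)$.)

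The core of the argument is then: let $V_\mu \subseteq T^* M$ be the $\mu$-eigenspace of $\Delta_L$ on one-forms, where $\mu$ is chosen (via the commutation formula) so that $\delta^*(V_\mu)$ lands in the $\lambda$-eigenspace on tensors. Using that $\delta^*$ is injective on the orthogonal complement of $\ker\delta^*$, the image $\delta^*(V_\mu)$ contributes $\dim V_\mu - \dim(V_\mu \cap \ker\delta^*)$ to the $\lambda$-eigenspace of $\Delta_L$ inside $\delta^*(T^* M)$; conversely every $\lambda$-eigentensor in $\delta^*(T^* M)$ arises this way, so $\dim(E_\lambda \cap \delta^*(T^* M)) = \dim V_\mu - \dim(V_\mu \cap \ker\delta^*)$. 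When $\lambda \neq -2$, $\mu \neq 0$ so $V_\mu \cap \ker\delta^* = 0$ (Killing fields are the $0$-eigenforms), giving $\dim(E_\lambda \cap \delta^*(T^*M)) = \dim V_\mu$; the hypothesis says $\dim V_\mu = m = \dim E_\lambda$, so $E_\lambda \cap \ker\delta = 0$. When $\lambda = -2$, $\mu = 0$ and $V_0 \supseteq \ker\delta^*$ (in fact $V_0 = \ker\delta^*$ by Bochner), so $\dim(E_\lambda\cap\delta^*(T^*M)) = \dim V_0 - \dim(\ker\delta^*) = \dim V_0 - p\dim(\ker\delta^*)$ with $p=1$; the hypothesis $\dim V_0 = m + p\dim(\ker\delta^*)$ again yields $\dim(E_\lambda\cap\delta^*(T^*M)) = m = \dim E_\lambda$, and we conclude identically.

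The main obstacle I anticipate is pinning down the commutation formula between $\Delta_L$ on one-forms and $\Delta_L$ on symmetric $2$-tensors precisely enough to identify the shift constant and to handle the Killing-field kernel correctly — in particular verifying that $\delta^*$ restricted to $(\ker\delta^*)^\perp$ really is an isomorphism onto $\delta^*(T^*M)$ intertwining the eigenspaces with no loss, and that no $\lambda$-eigentensor in $\delta^*(T^*M)$ fails to be hit. Everything else is the elementary linear algebra of comparing dimensions of eigenspaces across an equivariant injection; the delicacy is entirely in the $\lambda = -2$ borderline case where the kernel of $\delta^*$ intervenes, which is exactly what the parameter $p$ is designed to absorb.
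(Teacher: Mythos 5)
Your proposal is correct and is essentially the paper's own argument: the paper likewise pushes the eigenspace of one-forms into the $\lambda$-eigenspace of symmetric $2$-tensors via $\delta^*$, uses injectivity of $\delta^*$ away from Killing forms, and concludes by comparing dimensions (the paper phrases this as $\delta^*(V_\lambda)=W_\lambda$ rather than as $E_\lambda\cap\ker\delta=0$, but these are equivalent given the orthogonal decomposition). The only bookkeeping left is exactly what you flagged: with the paper's convention $\Delta_L v=\Delta v-v$ on one-forms, $\delta^*$ commutes with $\Delta_L$ with \emph{no} shift (so your $\mu$ equals $\lambda$, matching the hypothesis as stated), Killing one-forms are $(-2)$-eigenforms rather than $0$-eigenforms (whence the exceptional value $\lambda=-2$), and injectivity of $\delta^*$ on $V_\lambda$ for $\lambda\neq-2$ follows from the identity $2\delta G\delta^*(v)=-\Delta_L v-2v=-(\lambda+2)v\neq 0$.
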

\begin{proof}
Suppose that $\lambda\neq -2$ is an eigenvalue of $\Delta_L:S^2 T^* M\to S^2 T^* M$ with corresponding $m$-dimensional vector space of eigentensors $W_{\lambda}$. By assumption, there is an $m$-dimensional vector space $V_{\lambda}\subset T^* M$ such that for each $v\in V_{\lambda}$, $\Delta_L v=\lambda v$. For each non-zero $v\in V_{\lambda}$, $\delta^*(v)\neq 0$ because $2\delta G\delta^*(v)=-\Delta_L v-2v\neq 0$. Therefore, $\delta^*(V_{\lambda})$ is an $m$-dimensional subspace of $S^2 T^* M$. Since the Lichnerowicz Laplacian commutes with $\delta^*$, we then find that $\delta^*(V_{\lambda})\subseteq W_{\lambda}$, but since the spaces have the same dimension, 
$\delta^*(V_{\lambda})= W_{\lambda}$.

If $\lambda=-2$, then $V_{\lambda}$ is $m+\dim (\ker\delta^*)$ dimensional by assumption. Since $V_{\lambda}$ contains $\ker \delta^*$, we again find that $\delta^* (V_{\lambda})$ is an $m$-dimensional subspace of $W_{\lambda}$, so the two spaces are in fact identical. 
\end{proof}

\section{The Lichnerowicz Laplacian on Symmetric Spaces}\label{LLSS}
Lemma \ref{COELL} demonstrates that we can determine whether a given Einstein manifold satisfies the conditions of Theorem \ref{LSTR} by knowing the following:
\begin{enumerate}
    \item The dimension of the vector space of Killing fields on $(M,g)$. 
    \item The eigenvalues (including multiplicites) of the Lichnerowicz Laplacian on $T^*M=S^1T^* M$.
    \item The eigenvalues (including multiplicites) of the Lichnerowicz Laplacian on $S^2 T^* M$.
\end{enumerate}
Such knowledge is possible, for example, if our Einstein manifold is a compact symmetric space $(M = G/K,g)$, with $g$ induced by $-B$, where $B$ is the Killing form (although, the Einstein constant will not be $1$ in general, so we will need to rescale eventually). 
Indeed, $G$ is the connected component of the isometry group of the Einstein manifold containing the identity, so the dimension of the vector space of Killing fields is the dimension of $G$. 

On the other hand, to compute the spectrum of the two Lichnerowicz Laplacians, one can use representation theory. This has been done for various symmetric spaces in, for example, \cite{IT,Koiso80}. 
To start, note that for $p=1,2$, 
\begin{align*}
    L^2(S^pT^*M)=L^2(EM),
\end{align*}
where $E=S^p((\mathfrak{g}/\mathfrak{k})^*)$ is a $K$ representation with the adjoint action $Ad$, 
$EM=G\times _{Ad}E$ is a vector bundle, and $L^2(EM)$ are $L^2$  sections of $EM$. These $L^2$ sections correspond to functions $f : G \to E$ which satisfy $f(gk) = Ad(k)^{-1}f(g)$ for all $g \in G$ and $k \in K$. 

For any unitary $K$-representation $(V, \rho)$, we may define the induced $G$-representation
$$Ind_K^G(V) := \{ f \in L^2(G; V) \: ; \: f(gk) = \rho(k)^{-1}f(g) \mbox{ for all } k \in K, g \in G \}.$$
With this, we can see that $L^2(EM)$ is just the induced representation $Ind_K^G(E)$.  By Frobenius reciprocity for unitary representations of compact Lie groups (see, for example, Chapter 6 of \cite{Folland}), the induced representation $Ind_K^G(E)$ admits the following isotypic decomposition:
$$Ind_K^G(E)= \widehat{\bigoplus}_{\lambda\in \hat{G}}\Gamma_{\lambda} \otimes Hom_K(Res^G_K\Gamma_{\lambda},E),$$ where $\Gamma_{\lambda}$ is an irreducible $G$-representation with highest weight $\lambda$, and $G$ acts as $\rho_{\lambda} \otimes 1$ on each summand. It is well-known that the Casimir element $\mathcal{C}$ acts on each summand as $(\lambda+2\rho,\lambda)\otimes 1$, where $(\cdot,\cdot)$ is an inner product on the root space, and $\rho$ is the half sum of positive roots. This observation allows us to compute the spectrum of $\Delta_L$ acting on $L^2(EM)$ because of the following. 
\begin{lem}For $E=(\mathfrak{g}/\mathfrak{k})^*$ or $E=S^2 ((\mathfrak{g}/\mathfrak{k})^*)$, we have $\mathcal{C} = -\Delta_L$. 
\end{lem}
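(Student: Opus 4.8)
The claim is that the Casimir operator $\mathcal{C}$ on $L^2(EM)$ agrees with $-\Delta_L$ when $E = (\mathfrak{g}/\mathfrak{k})^*$ or $E = S^2((\mathfrak{g}/\mathfrak{k})^*)$ and $g$ is the symmetric metric induced by $-B$. The strategy is to express both operators in a common language — namely, as second-order operators on equivariant $E$-valued functions on $G$ — and compare them termwise. Since $(M = G/K, g)$ is a symmetric space, the curvature tensor has a clean algebraic expression in terms of Lie brackets (for $X,Y,Z \in \mathfrak{p} := \mathfrak{g}/\mathfrak{k}$ identified with the $(-1)$-eigenspace of the Cartan involution, $R(X,Y)Z = -[[X,Y],Z]$), which is exactly what one needs to match the curvature terms in the Weitzenböck-type formula defining $\Delta_L$ against the algebraic pieces of the Casimir.

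First I would fix an orthonormal basis $\{X_i\}$ of $\mathfrak{p}$ and $\{Y_a\}$ of $\mathfrak{k}$ (orthonormal with respect to $-B$), so that $\mathcal{C} = -\sum_i X_i^2 - \sum_a Y_a^2$ as a differential operator on functions $G \to E$. On the equivariant sections $f$ satisfying $f(gk) = Ad(k)^{-1}f(g)$, the $\mathfrak{k}$-part $-\sum_a Y_a^2$ acts as the Casimir of $K$ acting on $E$ via the (differentiated) adjoint representation; this is where the precise choice $E = S^p(\mathfrak{p}^*)$ and the normalization by $-B$ matter. Next I would recall the standard identification (Besse, or Lichnerowicz's original computation) of the rough Laplacian $\Delta$ on sections of the associated bundle $EM = G \times_{Ad} E$: on equivariant functions it is $-\sum_i X_i^2$ plus a zeroth-order term coming from the $\mathfrak{k}$-component of $[X_i, \cdot]$ acting on $E$. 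Then the core computation is to expand $\Delta_L = \Delta - (\text{Ricci terms}) + 2(\text{curvature term})$ using $Ric(g) = g$ (so the Ricci terms contribute a constant depending only on $p$) and the symmetric-space curvature formula, and to show that all the pieces reassemble exactly into the $K$-Casimir term $-\sum_a \rho_E(Y_a)^2$ together with $-\sum_i X_i^2$, i.e. into $\mathcal{C}$. The cases $p=1$ and $p=2$ should be handled in parallel, since $S^2(\mathfrak{p}^*)$ contributes curvature terms in each of its two tensor slots.

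The main obstacle I expect is the bookkeeping in the curvature term $2\,tr^g(h(R(X,\cdot)Y,\cdot))$ of $\Delta_L$: one has to rewrite $R$ via double Lie brackets, then recognize the resulting double sum $\sum_i$ over $[X_i,[X_i,\cdot]]$ acting in the tensor factors as precisely $-\sum_a \rho_E(Y_a)^2$ up to the Ricci correction — this uses the Jacobi identity and the fact that, for an orthonormal basis, $\sum_i \mathrm{ad}(X_i)^2 + \sum_a \mathrm{ad}(Y_a)^2$ restricted to $\mathfrak{p}$ is (minus) the identity, which is just the statement $Ric = g$ re-read algebraically. A clean way to organize this is to note that on $L^2(EM) = Ind_K^G(E)$ the operator $-\sum_i X_i^2$ is $\mathcal{C} - \mathcal{C}_K|_E$ where $\mathcal{C}_K|_E$ is a scalar on each $K$-irreducible summand of $E$, reduce the identity to a purely Lie-algebraic statement about $E$ as a $K$-module, and then verify it directly for $E = \mathfrak{p}^*$ and $E = S^2\mathfrak{p}^*$ using the symmetric-space relations $[\mathfrak{p},\mathfrak{p}]\subseteq \mathfrak{k}$ and the curvature identity. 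I would cite Besse's \emph{Einstein Manifolds} (the sections on homogeneous/symmetric spaces and the Lichnerowicz Laplacian) and Lichnerowicz for the underlying bundle-Laplacian identity, keeping the explicit index manipulation to a minimum in the write-up.
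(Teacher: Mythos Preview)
Your plan is correct and is precisely the computation carried out in Koiso's Proposition~5.3; the paper does not reprove this but simply cites Koiso, noting only that the argument goes through unchanged when restricted to symmetric (rather than general) tensor fields. So your approach is essentially the same as the paper's, except that you are writing out the content of the reference rather than invoking it.
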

\begin{proof}
The proof is almost identical to the proof of Proposition 5.3 in \cite{Koiso80}. The only difference is that the tensor fields that we are applying $\Delta_L$ and $C$ to are all symmetric. 
\end{proof}
Thus, in order to compute eigenvalues of $\Delta_L$ and their multiplicities, it suffices to decompose $Res_K^G \Gamma_{\lambda}$ and $E$ into irreducible $K$-representations.  The decomposition of $Res_K^G \Gamma_{\lambda}$ can be computed using the branching rules of irreducible $G$-representations into irreducible $K$-representations, which are well-understood for many pairs $(G, K)$ (see, e.g., Chapter 8 of \cite{GoodmanWallach}). We encounter an example of the use of this method in the next section. 
\section{Examples}
In this section, we consider three examples of closed Einstein manifolds with positive Ricci curvature, and discuss their stability in the sense of Theorem \ref{LSTR}. 
\subsection{Even-Dimensional Spheres}
We will use the method discussed in Section \ref{LLSS} in order determine the stability of the even dimensional spheres. To that end, let 
$G=SO(2n+1)$ and $K=SO(2n)$, and consider the sphere $\mathbb{S}^{2n}=G/K$ equipped with the  Riemannian metric induced by $-B$. We let $\mathfrak{g}=\mathfrak{so}(2n+1)$ and $\mathfrak{k}=\mathfrak{so}(2n)$. The Cartan subalgebra $\mathfrak{h}$ of $\mathfrak{g}$ is $n$-dimensional and an irreducible $\mathfrak{g}$ representation is determined by a highest weight 
$\lambda=(\lambda_1,\cdots,\lambda_{n})$, consisting of integers $\lambda_i$ satisfying 
\begin{align*}
    \lambda_1\ge \lambda_2\ge \cdots \ge \lambda_n.
\end{align*} 
Now we note that the Cartan subalgebra of $\mathfrak{so}(2n)$ is also $n$-dimensional, so an irreducible $\mathfrak{k}$ representation is determined by a highest weight $\bar{\lambda}$ which has the same number of components as for $\mathfrak{so}(2n+1)$, and the branching law is given as
\begin{align*}
    Res_{\mathfrak{so}(2n)}^{\mathfrak{so}(2n+1)}(\Gamma_{\lambda})=\bigoplus \Gamma_{\bar{\lambda}},
\end{align*}
with the sum taken over $\bar{\lambda}$ satisfying 
\begin{align*}
    \lambda_1\ge \bar{\lambda}_1\ge \lambda_2\ge \bar{\lambda}_2\ge \cdots \ge \bar{\lambda}_{n-1}\ge \lambda_n\ge \left|\bar{\lambda}_{n}\right|.
\end{align*}
We now compute $Hom_{K}(Res_K^G \Gamma_{\lambda},E)$ for $E=S^2((\mathfrak{g}/\mathfrak{k})^*)$. We first note that $E$ decomposes into the direct sum of two irreducible $K$ subrepresentations, namely the scalar multiples of the Killing form, and the traceless tensors. These are $K$-irreducible representations having highest weights $(0,0,\cdots,0)$ and $(2,0,0,\cdots,0)$ respectively. Therefore,  $Hom_{K}(Res_K^G \Gamma_{\lambda},E)$ is only non-trivial when $\lambda=(\lambda_1,\lambda_2,0,0,\cdots,0)$ with $\lambda_1\ge \lambda_2$ and $\lambda_2\le 2$, in which case, $Hom_{K}(Res_K^G \Gamma_{\lambda},E)$ is $1$-dimensional by Schur's Lemma. The eigenvalues of the Casimir acting on such a $\Gamma_{\lambda}$ 
 are $(\lambda_1+\lambda_2)(\lambda_1+2n-1-\lambda_2)$. 

We now also compute $Hom_{K}(Res_K^G \Gamma_{\lambda},E)$ for $E=(\mathfrak{g}/\mathfrak{k})^*$. In this case, $E$ is $K$-irreducible with highest weight $(1,0,0,\cdots,0)$. Therefore,  $Hom_{K}(Res_K^G \Gamma_{\lambda},E)$ is only non-trivial when $\lambda=(\lambda_1,0,0,0,\cdots,0)$ with $\lambda_1\ge 1$. In this case, $Hom_{K}(Res_K^G \Gamma_{\lambda},E)$ is again $1$-dimensional, and the corresponding eigenvalues of the Casimir are $\lambda_1(\lambda_1+2n-1)$. 

Now suppose that we rescale the Killing form so that $Ric(g)=g$, and the new Lichnerowicz Laplacian has an eigenvalue in $[-2,2]$. Then since the metric induced by the Killing form has Ricci curvature $2n-1$, we obtain an eigenvalue $\lambda\in [-2(2n-1),2(2n-1)]$ of the Casimir $\mathcal{C}$ acting on $L^2(EM)$ when $E=S^2((\mathfrak{g}/\mathfrak{k})^*)$. 
Then $\lambda=0$ or $\lambda=-2n$. The eigenvalue $\lambda=0$ has multiplicity equal to the dimension of $\Gamma_{(0,0,\cdots,0)}$, which is one. The eigenvalue $\lambda=-2n$ has multiplicity identical to the dimension of $\Gamma_{(1,0,0,\cdots,0)}$. However, this same eigenvalue $\lambda=-2n$ occurs for the Casimir acting on $L^2 (EM)$ when $E=(\mathfrak{g}/\mathfrak{k})^*$, and with the same multiplicity. Thus, by Lemma \ref{COELL} the even-dimensional spheres satisfy the conditions of Theorem \ref{LSTR}. Therefore, for any Riemannian metric $g_0$ close to the Einstein metric $g$ in $H^{2n+2}$, there is a Ricci iteration $\{g_i\}_{i=1}^{\infty}$ such that $g_1=cg_0$ for some $c>0$, and for each $k\ge 2n+2$, there exists $I\in \mathbb{N}$ such that $\{g_i\}_{i=I}^{\infty}$ is a sequence in $H^k$ which is convergent to $g$.
\begin{rem}
For the sphere $\mathbb{S}^n$, the eigenvalues and their multiplicities can alternatively be computed by embedding the sphere into $\mathbb{R}^{n+1}$ and using harmonic polynomials. This is done, for example, in \cite{Boucetta09}.
\end{rem}
\subsection{Complex Projective Space}
Rescale the Fubini-Study metric on $\mathbb{CP}^n$ so that we arrive at a Riemannian metric $g$ satisfying $Ric(g)=g$. By using either the method from Section \ref{LLSS} or results in \cite{Boucetta10}, we find that the Lichnerowicz Laplacian has a one-dimensional kernel, but also has an eigenvalue of $-2$ with corresponding eigentensors in $\ker\delta$. As a consequence, we cannot apply Theorem \ref{LSTR}. We can repeat the majority of the proof of Theorem \ref{LSTR} to arrive at the `Ricci iterations up to diffeomorphism' map $F:H^{2n+1}(\ker \delta)\to H^{2n+1}(\ker \delta)$. However, we can no longer conclude that $F$ is a local contraction around $0$. We conjecture that in general, the map $F$ is not stable close to $0$ because we encounter instability of the closely-related Ricci flow near the Fubini-Study metric, as seen in \cite{KnoNat,Kro13}. However, it should be noted that when restricting to K\"ahler metrics on $\mathbb{CP}^n$, the Fubini-Study metric is stable under the Ricci iteration, as shown in \cite{DarvasRubinstein19}. In fact, the authors demonstrate that a Ricci iteration starting from any K\"ahler metric converges to the Fubini-Study metric. 

\subsection{The Jensen Metric}
As seen in \cite{Jensen}, there are two homogeneous Einstein metrics on the homogeneous space $\mathbb{S}^{4n+3} =
G/K$, where $G = Sp(n + 1) \times Sp(1)$ and $K = Sp(n) \times Sp(1)$. One of these Einstein metrics is the round metric of
constant sectional curvature. As demonstrated in Theorem 3.2 (b) of \cite{BPRZ18}, there exists a sequence
of $G$-invariant Riemannian metrics $\{g_i \}^{\infty}_{i=-\infty}$ satisfying $g_i = Ric(g_{i+1})$, such that as $i \to \infty$, $g_i$ converges to
the round Einstein metric, but as $i \to -\infty$, $g_i$ converges to the non-round Einstein metric. As a
consequence, even if the `Ricci iterations up to diffeomorphism' map $F$ can be constructed around this non-round Einstein metric (which will be possible if the kernel of the Lichnerowicz Laplacian is one-dimensional), it will not be stable. 

\section{Acknowledgements}
We would like to thank Artem Pulemotov and Xiaodong Cao, our respective PhD supervisors, for their supervision of this project. 
We are also grateful to Andr\'es Fern\'andez Herrero for helpful discussions on Frobenius reciprocity. 
Timothy Buttsworth was supported in this research by an Australian Government Research Training Program
Scholarship.

\end{document}